\crefname{equation}{}{}
\newcommand{\R}{\mathbb{R}}
\newcommand{\Z}{\mathbb{Z}}
\newcommand{\N}{\mathbb{N}}
\newcommand{\ee}{\mathrm{e}}
\DeclareDocumentCommand\dd{ o g d() }{
	\IfNoValueTF{#2}{
		\IfNoValueTF{#3}
			{\mathrm{d}\IfNoValueTF{#1}{}{^{#1}}}
			{\mathinner{\mathrm{d}\IfNoValueTF{#1}{}{^{#1}}\argopen(#3\argclose)}}
		}
		{\mathinner{\mathrm{d}\IfNoValueTF{#1}{}{^{#1}}#2} \IfNoValueTF{#3}{}{(#3)}}
	}
\newcommand{\dx}{\dd{x}}
\newcommand{\dy}{\dd{y}}
\newcommand{\del}{\partial}
\newcommand{\eps}{\varepsilon}
\newcommand{\im}{\mathrm{i}}
\newcommand{\cc}{\text{c.c.}} 
\DeclareMathOperator{\Res}{Res}
\DeclareMathOperator{\Id}{Id}
\DeclareMathOperator{\supp}{supp}
\DeclareMathOperator{\Lin}{\mathcal{L}}
\DeclareMathOperator{\Nlin}{\mathcal{N}}
\newcommand{\Ord}{\mathcal{O}}
\newcommand{\vcc}{\vcentcolon}
\DeclarePairedDelimiter\abs{\lvert}{\rvert}
\DeclarePairedDelimiter\norm{\Vert}{\rVert}
\newcommand{\B}{\mathcal{B}}
\newcommand{\Qr}{Q_{r}} 
\newcommand{\Kr}{K_{r}} 
\newcommand{\Qs}{Q_{s}} 
\newcommand{\Ks}{K_{s}} 
\newcommand{\Sw}{\mathscr{S}} 
\newcommand{\F}{\mathcal{F}} 
\newcommand{\ball}{I} 
\def\txtc{{\textnormal{c}}}
\def\txtd{{\textnormal{d}}}
\def\txte{{\textnormal{e}}}
\def\txti{{\textnormal{i}}}
\def\txtD{{\textnormal{D}}}
\newcommand{\cL}{{\mathcal L}}  
\newcommand{\cM}{{\mathcal M}}  
\newcommand{\cN}{{\mathcal N}}  
\newcommand{\cO}{{\mathcal O}}  
\theoremstyle{plain}
\newtheorem{theorem}{Theorem}[section]
\newtheorem{lemma}[theorem]{Lemma}
\newtheorem{proposition}[theorem]{Proposition}
\theoremstyle{definition}
\theoremstyle{remark}
\newtheorem{remark}[theorem]{Remark}
\numberwithin{equation}{section}
 \title{Validity of amplitude equations\\ for non-local non-linearities}
 \author{Christian Kühn\thanks{Technical University of Munich, Faculty of Mathematics, Research Unit 
``Multiscale and Stochastic Dynamics'', 85748 Garching b.~M\"unchen, Germany, \texttt{ckuehn@ma.tum.de}} }
 \author{Sebastian Throm\thanks{Technical University of Munich, Faculty of Mathematics, Research Unit 
``Multiscale and Stochastic Dynamics'', 85748 Garching b.~M\"unchen, Germany, \texttt{throm@ma.tum.de}} }
 \affil{}
 \date{}
\begin{document}

\maketitle

\begin{abstract}
Amplitude equations are used to describe the onset of instability in wide classes of
partial differential equations (PDEs). One goal of the field is to determine 
simple universal/generic PDEs, to which many other classes of equations can be 
reduced, at least on a sufficiently long approximating time scale. In this work, we study
the case, when the reaction terms are \emph{non-local}. In particular, we consider 
quadratic and cubic convolution-type non-linearities.
As a benchmark problem, we use the Swift-Hohenberg equation. The resulting amplitude equation
is a Ginzburg-Landau PDE, where the coefficients can be calculated from the kernels. 
Our proof relies on separating critical and non-critical modes in Fourier space in
combination with suitable kernel bounds.
\end{abstract}

\textbf{Keywords:} Swift-Hohenberg, Ginzburg-Landau, amplitude equation, modulation equation,
non-local non-linearity, convolution operators.  

\section{Introduction}\label{Sec:intro}

In this work we study the non-local Swift-Hohenberg (SH) equation
\begin{equation}
\label{eq:SH}
\partial_t u = -(1+\partial_{x}^2)^2u+pu+uQ\ast u+uK\ast u^2, 
\end{equation}
where $(x,t)\in\R \times[0,\infty)$, $p\in\R$ is a (small) parameter, 
$u=u(x,t)\in\R$, and $K,Q$ are given symmetric, finite measures; 
here $\ast$ denotes convolution in the spatial coordinate. The terms 
$uQ\ast u$ and $u K\ast u^2$ are the quadratic and cubic non-local non-linearities 
in~\eqref{eq:SH}. Before we discuss our main result, we provide a brief
overview of amplitude (or modulation) equations as well as recent 
results on non-local PDEs, which provide considerable motivation to
consider~\eqref{eq:SH}. The rigorous analysis of~\eqref{eq:SH} 
and our mathematical contribution starts in the next section.

In dynamical systems, one common approach to deal with local instabilities
is to derive a standard system, which represents the dynamics
of an entire class~\cite{GH}. Consider an ordinary differential equation (ODE)
\begin{equation*}
\frac{\txtd z}{\txtd t}=f(z;p),\qquad z=z(t)\in\R^d,~p\in\R,
\end{equation*}
with an equilibrium point $z_*$ undergoing a local bifurcation upon variation
of a parameter $p$, say at $p=0$. The standard method consists in first 
deriving a low-dimensional center manifold $\cM^\txtc$~\cite{Carr}. The manifold $\cM^\txtc$ is tangent to the center eigenspace of $\txtD f(z_*;0)$. On 
$\cM^\txtc$, the dynamics is low-dimensional and can be brought into 
a \emph{normal form} by first Taylor-expanding and then using coordinate 
transformations to eliminate as many polynomial terms up to a given 
order~\cite{Kuznetsov,GH}. This procedure 
yields several generic classes of low-dimensional ODEs, which can then
be analysed.

A similar strategy is available for many PDEs~\cite{Hoyle,CrossHohenberg,
CrossGreenside}. A typical class is
\begin{equation}
\label{eq:baseRDE}
\partial_t u = \cL u + F(u;p),\qquad u=u(x,t),~(x,t)\in\Omega\times[0,T),~
\Omega\subseteq\R^d,  
\end{equation}     
where $\cL$ is a linear differential operator and $F(u;p)=F$ is the 
non-linearity. Suppose $u_*$ is a steady state
of~\eqref{eq:baseRDE} for all $p$. If the spectrum $\sigma(\cL+
\txtD F(u_*;p))$ is contained in the left-half of the complex plane, 
then $u_*$ is locally linearly stable~\cite{Henry}. Upon parameter 
variation of $p$, say without loss of generality at $p=0$, a bifurcation occurs, when 
$\sigma(\cL+\txtD F(u_*;0))\cap \txti \R\neq \emptyset$ and suitable
genericity conditions hold, i.e., transversal crossing of the spectrum 
and non-degeneracy of the non-linearity~\cite{Hoyle,CrossHohenberg}. 
As for the ODE, we may ask, whether there is a \emph{simple generic} 
normal form, now called \emph{amplitude} or \emph{modulation} equation, 
to describe the formation of non-trivial patterns near $p=0$. For a 
bounded domain $\Omega$ and suitable $\cL$, 
one may often use standard centre manifold reduction for point spectrum 
crossing $\txti\R$ at $p=0$~\cite{BatesJones,VanderbauwhedeIooss}. 
However, for cases involving unbounded domains, one usually faces 
essential spectrum crossing $\txti\R$, which presents substantial
challenges as one expects the amplitude equation to be a PDE, not an ODE,
in this context~\cite{CrossHohenberg}.

The development of the field of amplitude equations has a long history
and a benchmark problem is to consider the \emph{local} Swift-Hohenberg
equation~\cite{BurkeKnobloch,LegaMoloneyNewell,LloydSandstedeAvitabileChampneys,
Schneider5} 
\begin{equation}
\label{eq:SHloc}
\partial_t u = -(1+\Delta)^2u+pu+\cN(u), \qquad x\in\R^d,
\end{equation}
where $\cN(u)$ is a given non-linearity, frequently taken as a quadratic-cubic 
polynomial. The spectrum of the linearised 
operator has two quadratic tangencies with $\txti \R$ for $p=0$. To derive 
an amplitude equation \emph{formally}, one possibility is to use the method of
multiple scales~\cite{KevorkianCole,BenderOrszag,Hoyle,KuehnBook} in combination 
with the ansatz
\begin{equation}
\label{eq:ansatzA}
u(x,t)\approx \psi_A(x,t)= \varepsilon(A(X,T)\txte^{\txti k\cdot x}+\overline{A}(X,T)
\txte^{-\txti k\cdot x}),
\end{equation} 
where $X$ are the new scaled variables with $X_{i}=x_{i}\varepsilon^{a_i}$
for some exponents $a_i>0$, $T=t\varepsilon^{b}$ is a scaled time for some 
$b>0$, $k\in \Z^d$ is a suitably chosen wave vector, and $A$ is a slowly modulated
amplitude governing the envelope of the fast Fourier modes. One 
re-writes~\eqref{eq:SHloc} using the doubled number of variables 
$(x,t;X,T)$ via the chain rule, inserts an asymptotic series
\begin{equation*}
u=u_0(x,t;X,T)+\varepsilon u_1(x,t;X,T) + \varepsilon^2 u_2(x,t;X,T)+\cdots
\end{equation*}
into the resulting PDE, and then uses~\eqref{eq:ansatzA} to derive a PDE
for $A(X,T)$. For example, $d=1$, $\cN(u)=-u^3$, and $k=1$ yield the (real) 
Ginzburg-Landau equation~\cite{AransonKramer,Mielke1,LevermoreOliver}
\begin{equation}
\label{eq:RGL}
\partial_T A=4\partial_{X}^2+\hat{p}A-3A|A|^2,\qquad p/\varepsilon^2=:\hat{p}\in\R.
\end{equation}  
The next step is to prove \emph{rigorous validity} of the approximation, 
which has been discussed in many publications for \emph{local} 
PDEs; see e.g., \cite{KirrmannSchneiderMielke,vanHarten,Schneider1,
Schneider2,Schneider3}.
The typical structure of the approximation results is the following: 
Assume the amplitude equation has a solution $A$ of a certain regularity
over a time scale $T\in [0,T_*]$ and $\|\psi_A(\cdot,0)-u(\cdot,0)\|\lesssim 
\varepsilon^{\alpha_0}$ for all $x$ in the spatial domain. Then one proves 
\begin{equation}
\|\psi_A(\cdot,t)-u(\cdot,t)\|\lesssim \varepsilon^{\alpha},\quad 
\text{for all $t\in\left[0,\frac{T_*}{\varepsilon^{\beta}}\right]$,} 
\end{equation} 
where various choices of the (space-variable) norm $\|\cdot\|$ 
can be considered.
For example, in the case \eqref{eq:SHloc}--\eqref{eq:RGL} with
$\alpha_0=2$, $\alpha=2$, and $\beta=2$ one may prove a uniform
pointwise $\cO(\varepsilon^2)$-approximation over a long time scale of order 
$\cO(T_*/\varepsilon^2)$~\cite{KirrmannSchneiderMielke}.

There have been several recent works, using a multiple scales
approach to \emph{formally} derive amplitude equations also in the
case when the non-linearity is \emph{non-local}. Morgan and 
Dawes~\cite{MorganDawes} studied a Swift-Hohenberg 
equation~\eqref{eq:SHloc} for $d=1$ with \emph{non-local non-linearity} 
\begin{equation}
\label{eq:MD}
\cN(u)=c_2u^2-u^3-c_3u\int_{\R} K(\cdot-y)u(y,t)^2~\txtd y,
\end{equation}
where $c_2,c_3\in\R$ are parameters.
They provided the formal derivation of the amplitude equation in the
case~\eqref{eq:MD}, calculated the coefficients in the Ginzburg-Landau 
equation for two classes of the kernel $K$ explicitly, and provided 
numerical bifurcation studies of the amplitude equation.
Hence, their work provides \emph{immediate motivation} to investigate the 
\emph{rigorous validity} of amplitude equations for our non-local Swift-Hohenberg
equation~\eqref{eq:SH}. Indeed,~\eqref{eq:MD} is just a special case of
the non-linearity in~\eqref{eq:SH} as we allow $\delta$-measures to 
appear in the kernels $Q,K$. Faye and Holzer~\cite{FayeHolzer} have studied 
the non-local Fisher-Kolmogorov-Petrovskii-Piscounov (FKPP) equation~\cite{Gourley}
\begin{equation}
\label{eq:FH}
\partial_t u = \partial_{x}^2u+c_2u\left(1-\int_\R Q(\cdot-y)u(y,t)~\txtd y\right),
\end{equation}
which has raised considerably interest recently in the literature; see
e.g.~\cite{BerestyckiNadinPerthameRyzhik,BerestyckiJinSilvestre,GenieysVolpertAuger,AchleitnerKuehn,
AlfaroCoville}. Faye and Holzer are interested in modulated travelling fronts 
bifurcating from the monotone FKPP invasion wave upon variation of $c_2$. Part 
of their work~\cite[Sec.3]{FayeHolzer}, contains a multiple scales ansatz to
derive amplitude equations for the modulated fronts, which again yields
a Ginzburg-Landau equation with coefficients that can be calculated from
the kernel $Q$. As in the case of~\eqref{eq:MD}, also~\eqref{eq:FH}
provides strong motivation to investigate non-local non-linearities and
related amplitude equations in more detail.

The model problem~\eqref{eq:SH} can also be motivated
more abstractly. It contributes to the \emph{general interest} to obtain a
better understanding of non-local non-linearities. Examples 
include neural field equations~\cite{Bressloff,Coombes}, phase-field 
models~\cite{BatesFifeRenWang,Chen1}, non-local singular perturbation 
problems~\cite{Bose,Fife4}, various types of reaction-diffusion 
PDEs~\cite{Souplet,RubinsteinSternberg,SiebertAlonsoBaerSchoell}, 
non-local Schr\"odinger equations~\cite{AblowitzMusslimani,Ackermann}, 
non-local models in vegetation pattern formation~\cite{Meron1,Siero} and
vast classes of PDEs with 
constraints, e.g., elliptic-parabolic systems with elliptic part 
solvable in integral form. For all these scenarios, rigorous results on
amplitude equations are going to be relevant.

Our main result for~\eqref{eq:SH} can informally be stated as follows: Recall
$Q,K$ are finite measures, which are symmetric, so that 
they obey the same symmetry of the spectrum of the linearised Swift-Hohenberg 
equation. Consider the \emph{local} Ginzburg-Landau equation for an amplitude 
$A$, where the coefficients in this equation can be calculated from the Fourier 
transforms of $Q,K$. Suppose $A(X,T)$ is a sufficiently regular solution 
for $T\in [0,T_*]$ and $\|\psi_A(\cdot,0)-u(\cdot,0)\|_{C^4}\lesssim \varepsilon^2$, then
\begin{equation}
\|\psi_A(\cdot,t)-u(\cdot,t)\|_{C^4}\lesssim \varepsilon^{2},\quad 
\text{for all $t\in\left[0,\frac{T_*}{\varepsilon^{2}}\right]$,} 
\end{equation}  
with $\psi_A(x,t)=\varepsilon(A(\varepsilon x,\varepsilon^2t)\txte^{\txti x}+
\overline{A}(\varepsilon x,\varepsilon^2t)\txte^{-\txti x})$; see also 
Theorem~\ref{Thm:main}.

In some sense, our result is the natural analogue to the classical local result, 
and we include the local result as a special case in our approach. The key
proof strategy is to generalise techniques from~\cite{Schneider4,MielkeSchneider1} 
to the non-local case using suitable a priori bounds. Although these 
bounds do not yield the exact cancellation property initially developed
in~\cite{KirrmannSchneiderMielke} via an improved higher-order approximation, 
the kernel bounds do still yield the correct error order, i.e., only produce 
terms of order $\cO(\varepsilon^3)$ in the final result. Our method is designed to 
be general enough to handle larger classes of PDEs,
not just~\eqref{eq:SH}, as we only use the spectral information from the 
linear part, and the non-linearity contains the first two important forms of 
quadratic and cubic terms. However, the Swift-Hohenberg model problem already 
shows very clearly the key steps required in the analysis. In summary, our
results provide a step towards a more general theory of amplitude equations
for non-local PDEs.

\section{Assumptions and main result}\label{Sec:results}

We now specify the assumptions used throughout this work and we precisely state the main result we are going to show. Therefore, we recall~\eqref{eq:SH} and note that the considerations in Section~\ref{Sec:intro} suggest the scaling $p=\eps^2$ with a small parameter $\eps>0$. Thus, we are led to study the equation
\begin{equation}\label{eq:SW}
 \del_{t}u=-(1+\del_{x}^{2})^{2}u+\eps^{2} u-uQ\ast u-uK\ast u^2 \quad \text{on }\R.
\end{equation}
The precise assumptions on the convolution kernels $Q$ and $K$ will be given below (see Section~\ref{Sec:assumptions}).

\subsection{Assumptions on $Q$ and $K$}\label{Sec:assumptions}

In the remainder of this work, the convolution kernels $Q$ and $K$ are assumed to be finite, symmetric measures on $\R$, i.e.\@ $Q,K\in\mathcal{M}^{\text{fin}}(\R)$ and symmetric, such that it holds 
\begin{equation}\label{eq:ker:moment:bd}
 \int_{\R}\abs{x}\abs{Q}(\dx)<\infty\quad \text{and}\quad \int_{\R}\abs{x}\abs{K}(\dx)<\infty.
\end{equation}

\begin{remark}
 Note that throughout this work, we will use the notation $Q(\dx)=Q(x)\dx$ and $K(\dx)=K(x)\dx$ for simplicity although $Q$ and $K$ might not have a Lebesgue density. Thus all integrals occurring have to be interpreted accordingly. In particular, we write by abuse of notation $\norm{fQ}_{L^{1}}$ for the expression $\int_{\R}\abs{f}\abs{Q}(\dx)$ and equivalently also for $K$.
\end{remark}
Since our analysis relies crucially on the use of the Fourier transform, we have to restrict moreover at certain places to measures $Q$ and $K$ which can be decomposed as
\begin{equation}\label{eq:QK:decomposition}
 \begin{gathered}
   Q=\Qr+\Qs\quad \text{and}\quad K=\Kr+\Ks\quad \text{with } \Qr,\Kr\in\Sw\\
   \text{and }\Qs,\Ks\in\mathcal{M}^{\text{fin}}(\R)\text{ compactly supported}.
 \end{gathered}
\end{equation}
Here, $\Sw$ denotes the Schwartz space of smooth and rapidly decaying functions, while the dual space of tempered distributions will be denoted by $\Sw'$ throughout this work. We emphasise that the usual embedding $\mathcal{M}^{\text{fin}}(\R)\subset \Sw'$ yields that we may consider $\Qs$ and $\Ks$ also as elements in $\Sw'$.

\begin{remark}\label{Rem:assumptions:kernels}
 We note that~\eqref{eq:QK:decomposition} in particular implies~\eqref{eq:ker:moment:bd} while~\eqref{eq:QK:decomposition} also allows for purely 'local' non-linearities if we choose $Q=q\delta(\cdot)$ and $K=k\delta(\cdot)$.
\end{remark}

 To simplify the notation at several places, we define
 \begin{equation*}
  \begin{gathered}
     \Lin(v)\vcc=-(1+\del_{x}^{2})^{2}v+\eps^{2}v,\quad \Nlin_{Q}(v)\vcc=-u Q\ast u, \quad \Nlin_{K}(v)=-u K\ast u^2\\
     \quad \text{and}\quad \Nlin(v)=\Nlin_{Q}(v)+\Nlin_{K}(v)
  \end{gathered}
 \end{equation*}
 such that~\eqref{eq:SW} can be written as
 \begin{equation*}
  \del_{t}u=\Lin(u)+\Nlin(u).
 \end{equation*}

Moreover, we have the following continuity property for the convolution operators induced by $Q$ and $K$.

\begin{lemma}\label{Lem:continuity:convolution}
 For each $n\in\Z$ and $k\in \N_{0}$ there exists a constant $C>0$ such that it holds
 \begin{align}
  \norm{B_{1}(Q\ee^{\im n\cdot})\ast B_{2}}_{C^{k}}&\leq C\norm{B_{1}}_{C^{k}}\norm{B_{2}}_{C^{k}}\label{eq:continuity:1}
  \shortintertext{and}
  \norm{B_{1}(K\ee^{\im n\cdot})\ast (B_{2}B_{3})}_{C^{k}}&\leq C\norm{B_{1}}_{C^{k}}\norm{B_{2}}_{C^{k}}\norm{B_{3}}_{C^{k}}\label{eq:continuity:2}
 \end{align}
 for all $B_{\ell}\in C^{k}_{\text{b}}(\R)$ with $\ell\in\{1,2,3\}$.
\end{lemma}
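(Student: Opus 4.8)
The plan is to prove the two estimates \eqref{eq:continuity:1} and \eqref{eq:continuity:2} by first reducing to the case $k=0$ and then exploiting the decomposition~\eqref{eq:QK:decomposition}. For the reduction in $k$, I would use the Leibniz rule: writing $B_{1}(Q\ee^{\im n\cdot})\ast B_{2}$ and differentiating $j\le k$ times, every derivative either falls on $B_{1}$, on $B_{2}$ (passing through the convolution), or produces a factor $\im n$ from $\ee^{\im n\cdot}$; since the term $Q\ee^{\im n\cdot}$ is never differentiated, the $C^{k}$-norm of the product is controlled by a finite sum of $C^{0}$-type bounds of the form $\norm{B_{1}^{(\ell_{1})}(Q\ee^{\im n\cdot})\ast B_{2}^{(\ell_{2})}}_{C^{0}}$ with $\ell_{1}+\ell_{2}\le k$ and constants depending on $n,k$. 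So it suffices to establish the sup-norm bound $\norm{B_{1}(Q\ee^{\im n\cdot})\ast B_{2}}_{C^{0}}\le C\norm{B_{1}}_{C^{0}}\norm{B_{2}}_{C^{0}}$, and analogously for $K$.

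For the sup-norm bound, the key observation is that $Q$ is a finite measure, so the convolution with a bounded function is bounded: for fixed $x$,
\begin{equation*}
  \abs{B_{1}(x)}\,\Bigl\lvert\int_{\R}\ee^{\im n(x-y)}B_{2}(y)\,Q(\dx\mapsto x-y)\Bigr\rvert
  \le \norm{B_{1}}_{L^{\infty}}\norm{B_{2}}_{L^{\infty}}\int_{\R}\abs{Q}(\dy),
\end{equation*}
where I have used $\abs{\ee^{\im n\cdot}}=1$ and the total variation $\abs{Q}(\R)<\infty$ (which holds since $Q\in\mathcal{M}^{\mathrm{fin}}(\R)$). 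Here the moment bound~\eqref{eq:ker:moment:bd} is not even needed at the level $k=0$; finiteness of the measure already suffices, and the constant is $C=\abs{Q}(\R)$. The cubic estimate is identical with $Q$ replaced by $K$ and the single factor $B_{2}$ replaced by the product $B_{2}B_{3}$, using $\norm{B_{2}B_{3}}_{L^{\infty}}\le\norm{B_{2}}_{L^{\infty}}\norm{B_{3}}_{L^{\infty}}$. One should also check that $B_{1}(Q\ee^{\im n\cdot})\ast B_{2}$ is indeed $C^{k}$: continuity of the convolution with a finite measure against a bounded continuous function follows from dominated convergence, and the differentiability claims follow because each derivative can be moved onto the $C^{k}_{\mathrm{b}}$-factors as above.

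I do not expect any genuine obstacle here; the statement is essentially a bookkeeping lemma recording that convolution against a finite measure is a bounded operator on $C^{k}_{\mathrm{b}}$ and that multiplying by bounded $C^{k}$ functions and by $\ee^{\im n\cdot}$ preserves this, with constants depending on $n$ and $k$. The only mildly delicate points are: (i) making the Leibniz expansion precise so that the $Q$-factor is never hit by a derivative — this is automatic since $\ee^{\im n\cdot}$ absorbs all the derivatives that would formally act on $Q\ee^{\im n\cdot}$, each contributing a harmless power of $n$; and (ii) justifying that differentiation commutes with the convolution, which is standard for convolution of a $C^{k}_{\mathrm b}$ function with a finite measure. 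Accordingly I would present the argument compactly: reduce to $k=0$ via Leibniz, bound the $C^{0}$ expression by the total variation of the measure times the product of sup-norms, and remark that the $C^{k}$ regularity is inherited from the $C^{k}_{\mathrm b}$ factors.
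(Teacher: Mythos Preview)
Your proposal is correct and matches the paper's approach: bound the $C^{0}$ case via $\abs{Q}(\R)<\infty$ together with $\abs{\ee^{\im n\cdot}}=1$, and then extend to general $k$ by Leibniz' rule. Two small clarifications: the decomposition~\eqref{eq:QK:decomposition} is in fact not needed here (as you yourself note), and no factors of $\im n$ arise in the Leibniz expansion, since $\del_{x}\bigl((Q\ee^{\im n\cdot})\ast B_{2}\bigr)=(Q\ee^{\im n\cdot})\ast B_{2}'$---the derivative always passes to $B_{2}$, never to the kernel.
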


\begin{proof}
 Due to the assumptions of $Q$ it holds 
 \begin{equation}\label{eq:continuity:proof:1}
  \abs*{B_{1}(Q\ee^{\im n\cdot})\ast B_{2}}\leq \norm{B_{1}}_{C^{0}}\norm{B_{2}}_{C^{0}}\norm{Q\ee^{\im n\cdot}}_{L^{1}}\leq C\norm{B_{1}}_{C^{0}}\norm{B_{2}}_{C^{0}}.
 \end{equation}
 This proves~\eqref{eq:continuity:1} for $k=0$, while the case of general $k\in \N$ then follows immediately from~\eqref{eq:continuity:proof:1} together with Leibniz' rule. The proof of~\eqref{eq:continuity:2} is analogously.
\end{proof}

Finally, we introduce some notation, i.e.\@ the assumption~\eqref{eq:ker:moment:bd} allows to define the constants
\begin{equation}\label{eq:correctors}
 q_{n}\vcc=\int_{\R}\ee^{\im n x}Q(x)\dx\quad \text{and}\quad k_{n}\vcc=\int_{\R}\ee^{\im n x}K(x)\dx\quad \text{for all }n\in\Z.
\end{equation}
Moreover, we note that due to the symmetry of $Q$ and $K$ it also holds
\begin{equation*}
 k_{-n}=k_{n}\quad \text{and}\quad q_{-n}=q_{n}\quad \text{for all }n\in\Z
\end{equation*}
and thus in particular $k_{n},q_{n}\in\R$ for all $n\in\Z$.

\subsection{Main result}

As explained in Section~\ref{Sec:intro}, one expects that a solutions $u$ of~\eqref{eq:SW} can be approximated by a function of the form
\begin{equation}\label{eq:approximation}
 \psi(x,t)=\eps\bigl(A(\eps x,\eps^{2} t)\ee^{\im x}+\bar{A}(\eps x,\eps^{2}t)\ee^{-\im x}\bigr)
\end{equation}
provided that the initial condition $u_{0}=u(\cdot,0)$ is sufficiently close to $\psi(\cdot,0)$ and $A$ is a solution to the Ginzburg-Landau equation. Precisely, under our assumptions on $Q$ and $K$ formal calculations suggest to take $A$ as solution of 
\begin{equation}\label{eq:GL}
 \del_{T}A(X,T)=(1+4\del_{X}^{2})A(X,T)-\Bigl(2k_{0}+k_{2}-\frac{q_{1}q_{2}}{9}-\frac{q_{1}^{2}}{9}-2q_{0}q_{1}-2q_{1}^{2}\Bigr)\abs*{A(X,T)}^{2}A(X,T).
\end{equation}

The following proposition guarantees the existence of a solution to~\eqref{eq:GL} at least locally in time.

\begin{proposition}\label{Prop:GL:solvability}
 Assume that $A_{0}(\cdot)\in C_{\text{b}}^{4}(\R)$. Then there exists $T_{*}>0$ such that there exists a unique solution $A=A(X,T)\in C\bigl([0,T_{*}],C^{4}_{\text{b}}\bigr)$ of~\eqref{eq:GL} with $A(\cdot,0)=A_{0}$.
\end{proposition}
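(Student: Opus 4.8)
The plan is to recast \eqref{eq:GL} as its variation-of-constants (Duhamel) integral equation and to solve the latter by the contraction mapping principle in $X_{T_{*}}\vcc=C\bigl([0,T_{*}],C^{4}_{\txtb}\bigr)$. Write the linear part as $L\vcc=1+4\del_{X}^{2}$ and let $c_{*}\vcc=2k_{0}+k_{2}-\frac{q_{1}q_{2}}{9}-\frac{q_{1}^{2}}{9}-2q_{0}q_{1}-2q_{1}^{2}\in\R$ be the coefficient of $\abs{A}^{2}A$ in \eqref{eq:GL}. The operator $L$ generates the shifted heat semigroup $S(T)=\ee^{T}\ee^{4T\del_{X}^{2}}$, which is analytic; since convolution with the Gaussian is a contraction on $C^{0}_{\txtb}(\R)$ and commutes with $\del_{X}$, one has $\norm{S(T)}_{C^{4}_{\txtb}\to C^{4}_{\txtb}}\leq\ee^{T}$ for all $T\geq0$. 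A function $A\in X_{T_{*}}$ with $A(\cdot,0)=A_{0}$ solves \eqref{eq:GL} if and only if it is a fixed point of the map
\begin{equation}\label{eq:GL:Duhamel}
 (\Phi A)(T)\vcc=S(T)A_{0}-c_{*}\int_{0}^{T}S(T-s)\bigl(\abs{A(s)}^{2}A(s)\bigr)\,\ds,\qquad T\in[0,T_{*}].
\end{equation}

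Two estimates drive the argument. First, $C^{4}_{\txtb}(\R)$ is, up to an equivalent norm, a Banach algebra under pointwise multiplication — by Leibniz' rule, exactly as in the proof of Lemma~\ref{Lem:continuity:convolution} — so the cubic map $A\mapsto\abs{A}^{2}A=A^{2}\bar{A}$ is locally Lipschitz, with $\norm{\abs{A}^{2}A-\abs{B}^{2}B}_{C^{4}_{\txtb}}\leq C\bigl(\norm{A}_{C^{4}_{\txtb}}^{2}+\norm{B}_{C^{4}_{\txtb}}^{2}\bigr)\norm{A-B}_{C^{4}_{\txtb}}$. Second, $\sup_{0\leq T\leq1}\norm{S(T)}_{C^{4}_{\txtb}\to C^{4}_{\txtb}}\leq\ee$. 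Now set $R\vcc=2\ee\norm{A_{0}}_{C^{4}_{\txtb}}$ and let $\B_{R}\subset X_{T_{*}}$ be the closed ball of radius $R$ around the constant curve $A_{0}$. For $T_{*}\leq1$ the first estimate gives $\norm{(\Phi A)(T)-S(T)A_{0}}_{C^{4}_{\txtb}}\leq\ee\,\abs{c_{*}}\,C\,T_{*}R^{3}$ and $\norm{\Phi A-\Phi B}_{X_{T_{*}}}\leq\ee\,\abs{c_{*}}\,C\,T_{*}R^{2}\norm{A-B}_{X_{T_{*}}}$; hence there is $T_{*}>0$, depending only on $\norm{A_{0}}_{C^{4}_{\txtb}}$, for which $\Phi$ maps $\B_{R}$ into itself and is a contraction. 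Banach's fixed-point theorem then yields a unique solution $A\in\B_{R}$, and uniqueness within all of $X_{T_{*}}$ follows from the local Lipschitz bound for the nonlinearity combined with a Gronwall estimate applied to the difference of two solutions.

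The only genuinely delicate point — more a choice of function space than a computation — is the continuity of $T\mapsto A(T)$ in $C^{4}_{\txtb}$ up to $T=0$: the Duhamel integral in \eqref{eq:GL:Duhamel} tends to $0$ in $C^{4}_{\txtb}$ as $T\to0^{+}$ without difficulty, but $S(T)A_{0}\to A_{0}$ in the $C^{4}$-norm requires $\del_{X}^{4}A_{0}$ to be uniformly continuous, i.e.\@ the heat semigroup is not strongly continuous on all of $C_{\txtb}(\R)$, only on $BUC(\R)$. This is resolved by reading $C^{4}_{\txtb}$ throughout as $BUC^{4}$, the space of functions whose derivatives up to order $4$ are bounded and uniformly continuous; on $BUC^{4}$ the semigroup $S(T)$ is strongly continuous and analytic and leaves the space invariant, and the argument above applies verbatim (note $C^{4}_{\txtb}\subset BUC^{3}$ in any case). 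Alternatively, once the Banach-algebra property of $C^{4}_{\txtb}$ is in hand, one may simply quote the standard local existence and uniqueness theorem for semilinear parabolic equations with locally Lipschitz nonlinearity and analytic semigroup, e.g.\@ \cite{Henry}.
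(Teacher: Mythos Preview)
Your argument is correct and is precisely the contraction-mapping approach the paper invokes in its one-line proof; you have simply supplied the details. Your remark on strong continuity of the heat semigroup at $T=0$ (and the $BUC^{4}$ reading of $C^{4}_{\txtb}$) is a legitimate technical point that the paper leaves implicit.
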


\begin{proof}
 This statement follows easily by an application of the contraction mapping theorem.
\end{proof}

We can now state the main result that we will show in this work.

\begin{theorem}\label{Thm:main}
 Let $A\in C\bigl([0,T_{*}],C^{4}_{\text{b}}\bigr)$ be a solution of~\eqref{eq:GL}. Then for each $d>0$ there exist constants $\eps_{*},C>0$ such that for all $\eps\in(0,\eps_{*})$ the following statement holds. If $\norm*{u_{0}-\psi(\cdot,0)}_{C^{4}}\leq d\eps^{2}$ then there exists a unique solution $u$ of~\eqref{eq:SW} on the time interval $[0,T_{*}/\eps^{2}]$ with $u(\cdot,0)=u_{0}$ and moreover we have the estimate
 \begin{equation*}
  \norm{u(\cdot,t)-\psi(\cdot,t)}_{C^{4}}\leq C\eps^{2}\quad \text{for all }(x,t)\in\R \times [0,T_{*}/\eps^{2}].
 \end{equation*}
\end{theorem}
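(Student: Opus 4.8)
The plan is to adapt the error-estimate technique developed for the \emph{local} Swift--Hohenberg equation in \cite{Schneider4,MielkeSchneider1,KirrmannSchneiderMielke} to the present non-local situation; the only genuinely new ingredient is that every convolution operator occurring in the error analysis is controlled by the kernel bounds of \cref{Lem:continuity:convolution}, while the Fourier transforms of $Q,K$ are treated through the decomposition~\eqref{eq:QK:decomposition}. First I would construct an improved approximation. Since $\psi$ from~\eqref{eq:approximation} does not solve~\eqref{eq:SW} exactly, I replace it by $\Psi=\psi+\eps^{2}\psi_{2}+\eps^{3}\psi_{3}$ with correction terms of the form $\sum_{n}\eps^{j}A^{(j)}_{n}(\eps x,\eps^{2}t)\ee^{\im n x}$, chosen so that the residual $\Res(\Psi)\vcc=-\del_{t}\Psi+\Lin(\Psi)+\Nlin(\Psi)$ is of order $\cO(\eps^{\sigma})$ in $C^{4}_{\txtb}$ with $\sigma$ large enough for the long-time estimate below. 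Sorting the equation by powers of $\eps$ and by Fourier modes $\ee^{\im n x}$, the $\eps^{2}$-correction removes $-\psi\,Q\ast\psi$ at the non-resonant modes $n\in\{0,\pm2\}$ (where the symbol $-(1-n^{2})^{2}$ is invertible), whereas at the resonant mode $n=\pm1$ the symbol vanishes to leading order and the associated solvability condition at order $\eps^{3}$ is exactly the Ginzburg--Landau equation~\eqref{eq:GL}, with the cubic coefficient assembled from the constants $q_{0},q_{1},q_{2},k_{0},k_{2}$ of~\eqref{eq:correctors}. Non-locality enters only through the expansion
\[
 Q\ast\bigl(A(\eps\,\cdot)\,\ee^{\im n\cdot}\bigr)(x)=\ee^{\im n x}\sum_{j\ge 0}\frac{(-\im\eps)^{j}}{j!}\,\hat{Q}^{(j)}(n)\,(\del^{j}A)(\eps x)+(\text{remainder}),
\]
and the analogous one for $K$, with $\hat{Q}(n)=q_{n}$; here the decomposition~\eqref{eq:QK:decomposition} is essential, since $\Qr,\Kr\in\Sw$ have Schwartz Fourier transforms whereas $\Qs,\Ks$ have compact support and hence real-analytic Fourier transforms with all moments finite, so the expansion about integer wave numbers can be carried out to the required order with remainder controlled by the kernel bounds. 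Even when this expansion is truncated (only a first moment is imposed in~\eqref{eq:ker:moment:bd}), the uncancelled part is estimated directly in $C^{4}$ via \cref{Lem:continuity:convolution} and contributes only $\cO(\eps^{3})$-terms to the error equation; this is the kernel-bound substitute for the exact higher-order cancellation of \cite{KirrmannSchneiderMielke}.

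Next I would set $u=\Psi+R$ and derive the error equation
\[
 \del_{t}R=\Lin(R)+\txtD\Nlin(\Psi)R+\mathcal{B}(R,R)+\mathcal{C}(R,R,R)+\Res(\Psi),
\]
where $\txtD\Nlin(\Psi)$ is the linearisation at $\Psi$ (a convolution-type operator with $\cO(\eps)$-small coefficients, bounded on $C^{4}_{\txtb}$ uniformly in $\eps$ by \cref{Lem:continuity:convolution}), $\mathcal{B}$ collects the quadratic-in-$R$ terms, and $\mathcal{C}(R,R,R)=-R\,K\ast R^{2}$. Splitting $R=R_{\txtc}+R_{\txts}$ by smooth Fourier cut-offs, with $R_{\txtc}$ frequency-localised near $k=\pm1$ and $R_{\txts}$ its complement, the symbol $-(1-k^{2})^{2}+\eps^{2}\le-\beta<0$ is uniformly negative on the support of the $R_{\txts}$-cut-off for $\eps$ small, so the semigroup generated by $\Lin$ is exponentially contractive there on $C^{4}_{\txtb}$, whereas on $R_{\txtc}$ it grows only like $\ee^{\eps^{2}t}$, which stays $\cO(1)$ on $t\in[0,T_{*}/\eps^{2}]$. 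The required $C^{4}$-estimates for the linear semigroup and for the convolution multipliers are again obtained by splitting those multipliers according to~\eqref{eq:QK:decomposition}, which keeps the Fourier analysis elementary.

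Finally I would run a Gronwall/bootstrap argument over the long time scale. Passing to the scaled error $R=\eps^{2}\mathcal{R}$ and the slow time $T=\eps^{2}t$, the equation for $\mathcal{R}_{\txtc}$ becomes an $\cO(\eps)$-perturbation of the Ginzburg--Landau flow with $\cO(1)$ time derivative, so a Gronwall estimate on $T\in[0,T_{*}]$ keeps $\norm{\mathcal{R}_{\txtc}(\cdot,T)}_{C^{4}}$ bounded once it starts $\cO(1)$, while $\mathcal{R}_{\txts}$ is slaved, its exponential damping dominating the $\cO(\eps)$-linear and the quadratic/cubic feedback. Assuming $\norm{R(\cdot,t)}_{C^{4}}\le M\eps^{2}$ on a maximal subinterval of $[0,T_{*}/\eps^{2}]$, inserting this into the two Duhamel inequalities, and using that $\Res(\Psi)$ is of higher order, one checks that for $\eps$ small and $M$ suitably fixed the bound improves, hence holds on the whole interval; local existence of $u$ (equivalently of $R$) and its continuation follow from variation of constants together with a short-time contraction argument and this a priori bound, and then $\norm{u(\cdot,t)-\psi(\cdot,t)}_{C^{4}}\le\norm{R(\cdot,t)}_{C^{4}}+\norm{\Psi(\cdot,t)-\psi(\cdot,t)}_{C^{4}}\le(M+C)\eps^{2}$, which is the assertion (existence and regularity of $A$ being supplied by \cref{Prop:GL:solvability}).

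I expect the main obstacle to be the treatment of the $\cO(\eps)$-linear-in-$R$ terms over the $\cO(\eps^{-2})$ time interval, where a naive Gronwall estimate would lose a factor $\eps^{-1}$: this is exactly what forces the critical/non-critical splitting, since on the non-critical modes the genuine linear damping absorbs these terms while on the critical modes the slow-time rescaling turns them into an $\cO(1)$ perturbation of~\eqref{eq:GL}. The new difficulty relative to the local case is to ensure that the non-local operators $Q\ast$ and $K\ast$ neither couple critical and non-critical frequencies in an uncontrolled way nor spoil the residual order, and it is precisely here that the finiteness and moment assumptions on $Q,K$, the decomposition~\eqref{eq:QK:decomposition}, and the uniform $C^{4}\to C^{4}$ bounds of \cref{Lem:continuity:convolution} are invoked throughout.
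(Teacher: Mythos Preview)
Your overall strategy---improved approximation, residuum estimates, critical/non-critical splitting of the error, semigroup bounds plus Gronwall on the long time scale---matches the paper's proof. Two points of comparison are worth noting.

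First, the paper is more economical than your outline: it uses only an $\eps^{2}$-correction (the terms $A_{0},A_{2}$ in~\eqref{eq:approx:refined} with mode filter $E_{0}$), not an additional $\eps^{3}$-layer, and it never Taylor-expands $\hat{Q}$ or $\hat{K}$ about integer wavenumbers. Instead, the single physical-space estimate of \cref{Lem:approx},
\[
 \norm{B_{1}(\eps\cdot)(Q\ee^{\im n\cdot})\ast B_{2}(\eps\cdot)-q_{n}(B_{1}B_{2})(\eps\cdot)}_{C^{1}}\leq C\eps,
\]
which uses only the first-moment bound~\eqref{eq:ker:moment:bd}, already pushes every uncancelled non-local remainder one order higher in $\eps$. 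This replaces your Fourier-side expansion and avoids any need for higher moments or for the decomposition~\eqref{eq:QK:decomposition} at this stage.

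Second, your treatment of the dangerous $\cO(\eps)$ linear-in-$R$ term is slightly imprecise. Slow-time rescaling alone does \emph{not} tame $\eps\bigl(\phi_{c}\,Q\ast R_{c}+R_{c}\,Q\ast\phi_{c}\bigr)$ on the critical modes; it would still cost a factor $\eps^{-1}$ over $t\in[0,T_{*}/\eps^{2}]$. The paper's mechanism is structural: \cref{Lem:cancel:critical} shows that $E_{c}\bigl(B_{1}(Q\ee^{\im n\cdot})\ast B_{2}\bigr)=0$ whenever $B_{1},B_{2}$ have Fourier support near $\pm1$, so that $E_{c}L_{2}(R_{c})=0$ and $E_{c}N_{2}(R_{c})=0$ exactly. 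This is what makes the leading linear coupling in the $R_{c}$-equation genuinely $\cO(\eps^{2})$ (see~\eqref{eq:Rc:Rs}), after which the Gronwall argument closes. You hint at this when you worry about $Q\ast$ ``coupling critical and non-critical frequencies'', but the proof requires this vanishing to be stated and used explicitly; the non-local extension of the classical support argument (\cref{Lem:Ec:vanish}) to convolution non-linearities is precisely \cref{Lem:cancel:critical}.
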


\subsection{Notation and outline}

In order to prove Theorem~\ref{Thm:main}, we will follow the same approach as in~\cite{Schneider4} and thus, instead of showing that $\psi$ is a good approximation for solutions of~\eqref{eq:SW}, we will consider the intermediate approximation 
\begin{multline}\label{eq:approx:refined}
 \phi(x,t)=\eps\bigl((E_{0}A)(X,T)\ee^{\im x}+(E_{0}\bar{A})(X,T)\ee^{-\im x}\bigr)\\*
 +\eps^{2}\bigl((E_{0}A_{2})(X,T)\ee^{2\im x}+(E_{0}\bar{A}_{2})(X,T)\ee^{-2\im x}+(E_{0}A_{0})(X,T)\bigr)
\end{multline}
 with $X=\eps x$ and $T=\eps^{2}t$. The operator $E_{0}$ acts as a cut-off function in Fourier variables to select modes which are sufficiently close to zero. The precise definition of $E_{0}$ is given in~\eqref{eq:mode:filter}. The coefficients $A$, $A_{0}$ and $A_{2}$ are chosen such that $A$ is a solution of~\eqref{eq:GL} while $A_{2}$ and $A_{0}$ are given by
 \begin{equation}\label{eq:A2:A0}
  A_{0}=-2q_{1}\abs*{A}^{2}\quad \text{and}\quad A_{2}=-\frac{q_{1}}{9}A^{2}.
 \end{equation}
 One key ingredient in the proof of Theorem~\ref{Thm:main} is to consider the critical Fourier modes $\ee^{\pm \im x}$ separately from the uncritical ones. Therefore, one defines
  \begin{equation}\label{eq:phi:c:phi:s}
  \phi_{c}=(E_{0}A)\ee^{\im x}+(E_{0}\bar{A})\ee^{-\im x}\quad  \text{and}\quad \phi_{s}=(E_{0}A_{2})\ee^{2\im x}+(E_{0}\bar{A}_{2})\ee^{-2\im x}+(E_{0}A_{0})
 \end{equation}
 such that $\phi=\eps \phi_{c}+\eps^{2}\phi_{s}$. We then have the following lemma which states that $\phi_{s}$ is uniformly bounded and $\phi$ is uniformly close to $\psi$ up to an error of $\Ord(\eps^{2})$.
  
 \begin{lemma}\label{Lem:phi:estimates}
 Let $A\in C([0,T_{*}],C_{\text{b}}^{4}(\R))$ be a solution of~\eqref{eq:GL} and $\phi_{c}$ and $\phi_{s}$ be given by~\eqref{eq:phi:c:phi:s} together with~\eqref{eq:A2:A0}. Then there exists a constant $C>0$ such that it holds
 \begin{equation*}
  \sup_{t\in[0,T_{*}/\eps^2]}\norm{\phi_{s}}_{C^{4}}\leq C\quad \text{and}\quad \sup_{t\in[0,T_{*}/\eps^{2}]}\norm{\phi-\psi}_{C^{4}}\leq C\eps^{2},
 \end{equation*}
where $\phi=\eps\phi_{c}+\eps^{2}\phi_{s}$ and $\psi$ is defined in~\eqref{eq:approximation}.
\end{lemma}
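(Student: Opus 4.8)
The plan is to estimate $\phi_s$ and $\phi-\psi$ directly from their explicit formulas, using that $A\in C([0,T_*],C^4_{\textnormal{b}})$, that $E_0$ is a bounded operator on the relevant spaces, and that the rescaling $X=\eps x$, $T=\eps^2 t$ costs powers of $\eps$ on every derivative. First I would record the elementary scaling identity: if $g=g(X)$ with $X=\eps x$, then $\partial_x^j\bigl(g(\eps x)\bigr)=\eps^j (\partial_X^j g)(\eps x)$, so that $\norm{g(\eps\,\cdot)}_{C^k}\le \norm{g}_{C^k}$ for $\eps\in(0,1)$ and in fact the derivative part is $\Ord(\eps)$-small. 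Combined with the Leibniz rule for the multiplications by $\ee^{\im m x}$ (which contribute only bounded factors $\binom{k}{j}m^{k-j}$), this reduces everything to $C^4$-bounds on $A$ and on the polynomial expressions $A_2=-\frac{q_1}{9}A^2$ and $A_0=-2q_1\abs{A}^2$ from~\eqref{eq:A2:A0}.

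The first estimate, $\sup_{t}\norm{\phi_s}_{C^4}\le C$, then follows by writing out $\phi_s$ from~\eqref{eq:phi:c:phi:s}: each of the three terms is of the form $(E_0 g)(\eps x)\ee^{\im m x}$ with $m\in\{0,\pm 2\}$ and $g\in\{A_0,A_2,\bar A_2\}$. I would bound $\norm{(E_0 g)(\eps\,\cdot)\,\ee^{\im m\cdot}}_{C^4}\lesssim \norm{E_0 g}_{C^4}\lesssim \norm{g}_{C^4}$, where the last step uses the boundedness of the Fourier cut-off $E_0$ on $C^4_{\textnormal{b}}$ (to be established where $E_0$ is defined in~\eqref{eq:mode:filter}), and then $\norm{A_2}_{C^4}+\norm{A_0}_{C^4}\lesssim \abs{q_1}\bigl(\norm{A}_{C^4}^2+\norm{A}_{C^4}\cdot\norm{\bar A}_{C^4}\bigr)\lesssim \sup_{T\in[0,T_*]}\norm{A(\cdot,T)}_{C^4}^2$, which is finite since $A\in C([0,T_*],C^4_{\textnormal{b}})$. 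The constant $C$ is independent of $\eps$ precisely because the scaling only ever helps.

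For the second estimate I would compare $\phi=\eps\phi_c+\eps^2\phi_s$ with $\psi$ from~\eqref{eq:approximation}. Subtracting, the $\eps^2\phi_s$ piece is $\Ord(\eps^2)$ in $C^4$ by the bound just proved, so it remains to control $\eps(\phi_c-\psi/\eps)=\eps\bigl((E_0 A-A)(\eps x)\ee^{\im x}+\text{c.c.}\bigr)$. Thus the whole point is the smoothing/approximation property of $E_0$: since $E_0$ projects onto Fourier modes near $0$ and $A$ (after rescaling, its spectrum concentrates at scale $\eps$) is essentially unaffected, one has $\norm{(E_0 A - A)(\eps\,\cdot)}_{C^4}\lesssim \eps^{N}\norm{A}_{C^{4+N}}$ or at the very least $\to 0$; here one must be a little careful because $A$ is only $C^4_{\textnormal{b}}$, not Schwartz, so I would instead use that $\id-E_0$ annihilates the part of the spectrum in a fixed neighbourhood of $0$ and that, after rescaling, $\widehat{A(\eps\,\cdot)}$ is supported (or concentrated) within $\Ord(\eps)$ of the origin — formally $(\id-E_0)\bigl(A(\eps\,\cdot)\bigr)=0$ for $\eps$ small enough if $E_0$'s cut-off radius is a fixed constant and $\hat A$ has compact support, and otherwise the tail is controlled by the moment/decay assumptions giving an extra $\eps$. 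This yields $\norm{\phi_c-\psi/\eps}_{C^4}\lesssim\eps$, hence $\norm{\phi-\psi}_{C^4}\lesssim\eps^2$.

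The main obstacle is the last point: making precise the claim that $E_0$ acts (almost) as the identity on the rescaled amplitude $A(\eps\,\cdot)$ in the $C^4$-topology, uniformly in $t\in[0,T_*/\eps^2]$. This hinges entirely on the yet-unstated definition of $E_0$ in~\eqref{eq:mode:filter} and on whether $A$ is assumed merely $C^4_{\textnormal{b}}$ or has additional spectral localisation; the cleanest route is to choose $E_0$ with a fixed-radius cut-off so that $(\id-E_0)$ literally kills $A(\eps\,\cdot)$ once $\eps$ is below a threshold depending only on that radius, reducing the second estimate to the trivial bound $\norm{\phi-\psi}_{C^4}=\eps^2\norm{\phi_s}_{C^4}\le C\eps^2$. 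Everything else is the bookkeeping of Leibniz' rule and the scaling identity, exactly as in~\cite{Schneider4}.
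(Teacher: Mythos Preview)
Your decomposition and the treatment of the first estimate match the paper exactly: bound each term of $\phi_s$ via Leibniz' rule, the scaling identity $\partial_x^j(g(\eps\cdot))=\eps^j(\partial_X^j g)(\eps\cdot)$, the $C^0\to C^4$ boundedness of $E_0$ (Lemma~\ref{Lem:smoothing}), and the polynomial form of $A_0,A_2$ in $A,\bar A$. For the second estimate you also begin as the paper does, writing $\phi-\psi=\eps^2\phi_s+\eps\bigl(E_0^c(A(\eps\cdot))\,\ee^{\im x}+\cc\bigr)$ and disposing of the first summand by the bound on $\phi_s$.

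The gap is precisely where you flag it. Your proposed ``cleanest route''---arranging $(\Id-E_0)(A(\eps\cdot))\equiv 0$ for small $\eps$ via compact Fourier support of $A$---is not available: $A$ is only assumed to lie in $C^4_{\text{b}}$, so $\widehat A$ is merely a tempered distribution with no support restriction. Your alternative bound $\norm{E_0^c(A(\eps\cdot))}_{C^4}\lesssim\eps^N\norm{A}_{C^{4+N}}$ would not close either, since already $N=1$ asks for $C^5$ regularity you do not have. The paper instead invokes Lemma~\ref{Lem:E0:eps} (taken from~\cite{Schneider4}), which gives
\[
  \norm{E_0^c(A(\eps\cdot))}_{C^n}\le C_n\,\eps^n\,\norm{A}_{C^n}
\]
for every $A\in C^n_{\text{b}}$, with no spectral hypothesis on $A$. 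The mechanism is that the multiplier $\chi_0-1$ vanishes identically on $\ball_{1/8}(0)$ and hence can absorb $n$ factors of $k$: for each $0\le j\le n$ one may write $\partial_x^{\,j} E_0^c=M_j\,\partial_x^{\,n}$ with $M_j$ bounded on $C^0_{\text{b}}$, and the $n$ derivatives landing on $A(\eps\cdot)$ produce the factor $\eps^n$ while spending only $n$ derivatives of $A$. Taking $n=4$ yields $\norm{E_0^c(A(\eps\cdot))}_{C^4}\le C\eps^4\norm{A}_{C^4}$, so that piece of $\phi-\psi$ is $\Ord(\eps^5)$---comfortably within the $\Ord(\eps^2)$ required.
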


\begin{proof}
 The bound on $\phi_{s}$ is an immediate consequence of the assumptions on $A$, the definition of $A_{0}$ and $A_{2}$ in~\eqref{eq:A2:A0} and Leibniz' rule, while we also note that the operator $E_{0}$ commutes with $\del_{x}$.
 
 To verify the second estimate of the lemma, we note that
 \begin{equation*}
  \begin{split}
   (\phi-\psi)(x)&=\eps\Bigl(\bigl(E_{0}A(\eps \cdot)\bigr)(x)\ee^{\im x}+\bigl(E_{0}\bar{A}(\eps\cdot)\bigr)(x)\ee^{-\im x}-A(\eps x)\ee^{\im x}-\bar{A}\ee^{-\im x}\Bigr)+\eps^{2}\phi_{s}\\
   &=\eps^{2}\phi_{s}+\eps\Bigl(\bigl(E_{0}^{c}A(\eps \cdot)\bigr)(x)\ee^{\im x}+\bigl(E_{0}^{c}\bar{A}(\eps\cdot)\bigr)\ee^{-\im x}\Bigr),
  \end{split}
 \end{equation*}
 where $E_{0}^{c}$ is defined in~\eqref{eq:mode:filter}. Thus, combining Lemma~\ref{Lem:smoothing} below with Leibniz' rule as well as the first estimate of the lemma, the claim easily follows.
\end{proof}
 
 The main strategy to prove Theorem~\ref{Thm:main} is now as follows. First, we note that Lemma~\ref{Lem:phi:estimates} yields that $\psi$ can be approximated by $\phi$ on the time interval $[0,T_{*}/\eps^2]$ up to an error of $\Ord(\eps^{2})$. As a consequence, it is enough to prove Theorem~\ref{Thm:main} with $\psi$ replaced by $\phi$. The general approach for this will be to consider the approximation error $R=u-\phi$ and to show that this quantity remains of $\Ord(\eps^{2})$ on $[0,T_{*}/\eps^{2}]$. To this end, we will derive an evolution equation for $R$ and show that there exists a unique solution which is small on $[0,T_{*}/\eps^{2}]$. Since $u$ is on the other hand uniquely determined on a small time interval, this then yields that $u$ also exists on $[0,T_{*}/\eps^{2}]$ by a standard continuation argument. One crucial part within this approach consists in obtaining suitable estimates for the residuum of $\phi$ which is defined as
 \begin{equation}\label{eq:Res}
  \Res(\phi)\vcc=-\del_{t}\phi+\Lin(\phi)+\Nlin(\phi).
 \end{equation}
 The study of this expression will be contained in Section~\ref{Sec:residuum}. Moreover, in Section~\ref{Sec:equation:error}, we will derive the equation which has to be satisfied by $R$, while we already note, that in order to obtain that $R$ stays of $\Ord(\eps^2)$ on $[0,T_{*}/\eps^{2}]$, it will be necessary to consider the critical and uncritical modes separately. Based on these preparations, we will then provide the proof of Theorem~\ref{Thm:main} in Section~\ref{Sec:Proof:main}. Moreover, in Section~\ref{Sec:Fourier}, we recall several technical definitions and properties from~\cite{Schneider4} which will be used frequently. 
 
 \subsection{Main difference to \emph{local} non-linearity}
 
 To conclude this section, we will finally point out one main difference between the proof of Theorem~\ref{Thm:main} and the corresponding result for \emph{local} non-linearities, i.e.\@ the equation
 \begin{equation*}
  \del_{t}u=-(1+\del_{x}^{2})^{2}u+\eps^{2} u-qu^2-ku^3
 \end{equation*}
as for example considered in~\cite{MielkeSchneider1}. However, as mentioned in Remark~\ref{Rem:assumptions:kernels}, this equation is still contained as special case in our Theorem~\ref{Thm:main}.

As explained above, we follow the same main approach as in~\cite{MielkeSchneider1,Schneider4} by computing and estimating $\Res(\phi)$ in order to show that the approximation error $R=u-\phi$ remains small. However, in the case where the non-linearity is given as $\cN(u)$ with a polynomial $\cN$, the choice of $A$ together with~\eqref{eq:A2:A0} yields that in $\Res(\phi)$ several expressions of lower order in $\eps$ exactly cancel. In contrast to this, when we consider the more general \emph{non-local} non-linearities as in~\eqref{eq:SW} this is no longer the case. To circumvent this problem, we have to use the following result which states that although the lower order expressions do not cancel, we can still gain at least one order in $\eps$.

\begin{lemma}\label{Lem:approx}
 For each $n\in\Z$ there exists a constant $C>0$ such that it holds
 \begin{align*}
  \norm{B_{1}(\eps\cdot)(Q\ee^{n\im\cdot})\ast B_{2}(\eps \cdot)-q_{n}(B_{1}B_{2})(\eps\cdot)}_{C^{1}}&\leq C\eps \norm{B_{1}}_{C^{1}}\norm{B_{2}}_{C^{1}} \\
  \norm{B_{1}(\eps\cdot)(K\ee^{n\im\cdot})\ast(B_{2}B_{3})(\eps\cdot)-k_{n}(B_1B_2B_3)(\eps\cdot)}_{C^{1}}&\leq C\eps \norm{B_1}_{C^{1}}\norm{B_2}_{C^{1}}\norm{B_3}_{C^{1}}
 \end{align*}
 for all $B_{\ell}\in C_{\text{b}}^{1}(\R)$ with $\ell\in\{1,2,3\}$.
\end{lemma}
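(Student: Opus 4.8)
The plan is to rewrite both convolutions in ``lag'' form, so that the small parameter $\eps$ sits inside the argument of the slowly varying functions. Substituting $z=x-y$ gives
\[
 \bigl[(Q\ee^{\im n\cdot})\ast B_{2}(\eps\,\cdot)\bigr](x)=\int_{\R}Q(z)\ee^{\im n z}B_{2}(\eps x-\eps z)\dz ,
\]
and since $q_{n}=\int_{\R}\ee^{\im n z}Q(z)\dz$, the expression whose $C^{1}$-norm must be bounded equals
\[
 B_{1}(\eps x)\int_{\R}Q(z)\ee^{\im n z}\bigl(B_{2}(\eps x-\eps z)-B_{2}(\eps x)\bigr)\dz .
\]
The key point is that the integrand is now a difference of values of $B_{2}$ at points distance $\eps\abs{z}$ apart, hence of size $\Ord(\eps)$, and the first-moment bound~\eqref{eq:ker:moment:bd} on $Q$ (guaranteed by~\eqref{eq:QK:decomposition}) absorbs the weight $\abs{z}$.

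For the $C^{0}$-part I would apply the mean value theorem, $\abs{B_{2}(\eps x-\eps z)-B_{2}(\eps x)}\le\eps\abs{z}\,\norm{B_{2}}_{C^{1}}$, pull out $\norm{B_{1}}_{C^{0}}$, and arrive at the bound $\eps\,\norm{B_{1}}_{C^{1}}\norm{B_{2}}_{C^{1}}\int_{\R}\abs{z}\,\abs{Q}(\dz)$. For the derivative I would differentiate under the integral sign --- legitimate by dominated convergence, since $Q$ is a finite measure and the relevant difference quotients are bounded by $2\eps\,\norm{B_{2}'}_{C^{0}}$ --- and use the product rule. In the term where $\del_{x}$ falls on the outer factor $B_{1}(\eps x)$ an extra power of $\eps$ appears and the $C^{0}$-estimate applies directly. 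In the term where $\del_{x}$ falls on the integral one obtains $\eps\,B_{1}(\eps x)\int_{\R}Q(z)\ee^{\im n z}\bigl(B_{2}'(\eps x-\eps z)-B_{2}'(\eps x)\bigr)\dz$; here one cannot gain a second power of $\eps$ because $B_{2}$ is only $C^{1}$, but this is not needed --- the factor $\eps$ is already present, and the crude bound $\abs{B_{2}'(\eps x-\eps z)-B_{2}'(\eps x)}\le 2\,\norm{B_{2}}_{C^{1}}$ leaves only the finite total mass $\abs{Q}(\R)$ to be integrated.

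The estimate for $K$ is identical in structure: one writes $\bigl[(K\ee^{\im n\cdot})\ast(B_{2}B_{3})(\eps\,\cdot)\bigr](x)=\int_{\R}K(z)\ee^{\im n z}(B_{2}B_{3})(\eps x-\eps z)\dz$, subtracts $k_{n}(B_{1}B_{2}B_{3})(\eps\,\cdot)$, and repeats the argument with $B_{2}B_{3}$ in the role of $B_{2}$, using Leibniz' rule to control $\norm{(B_{2}B_{3})'}_{C^{0}}\le 2\,\norm{B_{2}}_{C^{1}}\norm{B_{3}}_{C^{1}}$ and analogously $\norm{B_{2}B_{3}}_{C^{1}}$ by $\norm{B_{2}}_{C^{1}}\norm{B_{3}}_{C^{1}}$. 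I do not expect a genuine obstacle here; the only two points requiring a little care are the rewriting in lag form (which is precisely what makes the small parameter visible, and which would fail for a general non-convolution non-linearity) and the justification of differentiating under the integral against the measure $Q$.
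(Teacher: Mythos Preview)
Your proposal is correct and follows essentially the same approach as the paper: rewrite the convolution so that the difference $B_{2}(\eps(x-z))-B_{2}(\eps x)$ appears, use the Lipschitz bound to extract the factor $\eps\abs{z}$, and invoke the first-moment condition~\eqref{eq:ker:moment:bd}. The only presentational differences are that the paper reduces the $C^{1}$-estimate to the $C^{0}$-estimate in one line via the chain-rule observation $\partial_{x}B_{\ell}(\eps x)=\eps\,\partial_{X}B_{\ell}(\eps x)$ (your explicit treatment of the derivative, including the ``crude'' bound when $\partial_{x}$ hits the integral, is in fact what makes that reduction work), and for the cubic term the paper telescopes $B_{2}(\eps(x-y))B_{3}(\eps(x-y))-B_{2}(\eps x)B_{3}(\eps x)$ rather than treating $B_{2}B_{3}$ as a single $C^{1}$ function---which amounts to the same thing.
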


\begin{proof}
 Since $\del_{x}B_{\ell}(\eps x)=\eps \del_{X}B_{\ell}(\eps x)$ for $\ell=1,2,3$ and $B_{\ell}\in C_{\text{b}}^{1}(\R)$ it suffices to prove that 
  \begin{align}
  \norm{B_{1}(\eps\cdot)(Q\ee^{n\im\cdot})\ast B_{2}(\eps \cdot)-q_{n}(B_{1}B_{2})(\eps\cdot)}_{C^{0}}&\leq C\eps \norm{B_{1}}_{C^{1}}\norm{B_{2}}_{C^{1}} \label{eq:approx:quadratic}\\
  \norm{B_{1}(\eps\cdot)(K\ee^{n\im\cdot})\ast(B_{2}B_{3})(\eps\cdot)-k_{n}(B_1B_2B_3)(\eps\cdot)}_{C^{0}}&\leq C\eps \norm{B_1}_{C^{1}}\norm{B_2}_{C^{1}}\norm{B_3}_{C^{1}} \label{eq:approx:cubic}.
 \end{align}
 We first consider~\eqref{eq:approx:quadratic} and notice that 
\begin{multline*}
 \abs*{B_{1}(\eps x)\bigl((Q\ee^{n\im \cdot})\ast B_{2}(\eps \cdot)\bigr)(x)-q_{n}(B_{1}B_{2})(\eps x)}\\*
 =\abs*{B_{1}(\eps x)\int_{\R}Q(y)\ee^{\im n y}\bigl(B_{2}(\eps(x-y))-B_{2}(\eps x)\bigr)\dy}\\*
 \leq \eps \norm{B_{1}}_{C^{0}}\norm{B_{2}}_{C^{1}}\int_{\R}\abs{yQ(y)}\dy\leq C\eps  \norm{B_{1}}_{C^{0}}\norm{B_{2}}_{C^{1}}.
\end{multline*}
Here we also used that $\abs*{B_{2}(\eps(x-y))-B_{2}(\eps x)}\leq \eps \norm{B_{2}}_{C^{1}}\abs{y}$ for all $x,y\in\R$. Thus, \eqref{eq:approx:quadratic} immediately follows. 

To prove~\eqref{eq:approx:cubic} we can argue analogously since we have the relation
\begin{multline*}
 \abs*{B_{1}(\eps x)\bigl((K\ee^{n\im\cdot})\ast(B_{2}B_{3})(\eps\cdot)\bigr)(x)-k_{n}(B_1B_2B_3)(\eps x)}\\*
 =\biggl|B_{1}(\eps x)\int_{\R}K(y)\ee^{\im n y}\bigl(B_{2}(\eps(x-y))-B_{2}(\eps x)\bigr)B_{3}(\eps(x-y))\dy\\*
 +B_{1}(\eps x)\int_{\R}K(y)\ee^{n\im y}B_{2}(\eps x)\bigl(B_{3}(\eps(x-y))-B_{3}(\eps x)\bigr)\dy\biggr|.
\end{multline*}
From this, the estimate~\eqref{eq:approx:cubic} follows in the same way as for the quadratic term.
\end{proof}

\section{Technical preparation}\label{Sec:Fourier}

Our strategy to prove Theorem~\ref{Thm:main} follows closely that one in~\cite{Schneider4}, where the equation
\begin{equation*}
 \del_{t}u=-(1+\del_{x}^{2})^{2}u+\eps^{2} u+u\del_{x} u
\end{equation*}
has been considered and we thus recall in this section several technical fundamentals. Moreover, we provide the necessary adaptations and extensions that we need for the situation that we consider in this work. More precisely, we will work in the space $C^{4}_{\text{b}}(\R)$ of four times differentiable functions with globally bounded derivatives. As already indicated before, one key ingredient is to consider the critical Fourier modes $\ee^{\pm \im x}$ separately from the uncritical ones which will be achieved by suitable multiplication operators in Fourier space the so-called mode filters. This approach makes it necessary, to work with the Fourier transform which is not directly defined on the space $C^{4}_{\text{b}}(\R)$. However, as also pointed out in~\cite{Schneider4} we can embed $C^{4}_{\text{b}}(\R)$ into $\Sw'$, where the Fourier transform is defined in the usual way by duality. 

We recall now the definition of the mode filters as given in~\cite{Schneider4} and for this, we will denote by $\ball_{r}(x)$ the open interval of radius $r$ centred around $x$, i.e.\@ $\ball_{r}(x)=(x-r,x+r)$. One then fixes non-negative and even functions $\chi_{c},\chi_{0}\in C_{\text{c}}^{\infty}(\R)$ which satisfy
\begin{equation*}
 \chi_{c}(k)=\begin{cases}
              1 & \text{if } k\in \ball_{1/8}(-1)\cup \ball_{1/8}(1)\\
              0 & \text{if } k\in\R\setminus\bigl(\ball_{1/4}(-1)\cup \ball_{1/4}(1)\bigr)
             \end{cases}
             \quad\text{and}\quad 
 \chi_{0}(k)=\begin{cases}
              1 & \text{if } k\in \ball_{1/8}(0)\\
              0 & \text{if } k\in\R\setminus \ball_{1/4}(0).
             \end{cases}
\end{equation*}
For these functions we additionally define $G_{c}$ and $G_{0}$ as the inverse Fourier transforms, i.e.
\begin{equation*}
 G_{c}(x)=\frac{1}{2\pi}\int_{\R}\ee^{\im k x}\chi_{c}(k)\dd{k}\quad \text{and}\quad G_{0}(x)=\frac{1}{2\pi}\int_{\R}\ee^{\im k x}\chi_{0}(k)\dd{k}.
\end{equation*}
The mode filters $E_{c}$, $E_{0}$, $E_{0}^{c}$ and $E_{s}$ are then defined as
\begin{equation}\label{eq:mode:filter}
 E_{c} v\vcc=G_{c}\ast v, \quad E_{0}v\vcc=G_{0}\ast v, \quad  E_{0}^{c}\vcc=E_{0}-\Id \quad \text{and}\quad E_{s}\vcc=\Id-E_{c}.
\end{equation}

\begin{remark}
 If we denote by $\F$ the Fourier transform, it is well-known that for $v\in\Sw$ it holds $\F(E_{c} v)=\chi_{c}\F(v)$ as well as $\F(E_{0} v)=\chi_{0}\F(v)$. Moreover, since $\chi_{c}$ and $\chi_{0}$ have compact support it holds in particular $\chi_{c},\chi_{0}\in\Sw$ such that~\eqref{eq:mode:filter} makes even sense for $v\in\Sw'$ since the convolution between tempered distributions and Schwartz functions is well-defined. 
\end{remark}

\begin{remark}
 We additionally remark that $E_{s}$ and $E_{0}^{c}$ can also be represented as convolution operators, with kernels $G_{s}=\delta-G_{c}$ as well as $G_{0}^{c}=G_{0}-\delta$. The corresponding Fourier transforms are given by $\chi_{s}=1-\chi_{c}$ and $\chi_{0}^{c}=\chi_{0}-1$. Note that in this case $G_{0}^{c}$ and $G_{s}$ are only measures due the fact that the Fourier transforms have unbounded support.
\end{remark}

For technical reasons it is also necessary to introduce further operators $E_{c}^{h}$ and $E_{s}^{h}$ which satisfy $E_{c}^{h}E_{c}=E_{c}$ and $E_{s}^{h}E_{s}=E_{s}$ and which are defined via $C^{\infty}$-functions $\chi_{c}^{h}$ and $\chi_{s}^{h}$. More precisely, $\chi_{c}^{h}\in C_{\text{c}}^{\infty}(\R)$ is chosen such that it vanishes outside $\ball_{3/8}(-1)\cup \ball_{3/8}(1)$ while $\chi_{s}^{h}$ vanishes in $\ball_{1/16}(-1)\cup \ball_{1/16}(1)$.

With these definitions, we can cite three results on the mode filters which are contained in~\cite{Schneider4} as Lemmas~3--5.

\begin{lemma}\label{Lem:smoothing}
 The operators $E_{c}$ and $E_{0}$ are linear and continuous mappings from $C^{0}$ to $C^{m}$. For every $m\geq 0$ there exists $C_{m}>0$ with $\norm{E_{0}u}_{C^{m}}+\norm{E_{c}u}_{C^{m}}\leq C_{m}\norm{u}_{C^{0}}$.
\end{lemma}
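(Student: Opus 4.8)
The plan is to exploit that $E_{c}$ and $E_{0}$ are convolution operators whose kernels $G_{c},G_{0}$ are Schwartz functions, and then to reduce the $C^{m}$-estimate to $L^{1}$-bounds on the derivatives of these kernels. First I would record that, since $\chi_{c},\chi_{0}\in C_{\text{c}}^{\infty}(\R)\subset\Sw$, the inverse Fourier transforms $G_{c}$ and $G_{0}$ again belong to $\Sw$; in particular, for every $j\in\N_{0}$ the derivatives $\del_{x}^{j}G_{c}$ and $\del_{x}^{j}G_{0}$ lie in $L^{1}(\R)$, with $L^{1}$-norms depending only on $j$ (through the fixed functions $\chi_{c}$ and $\chi_{0}$).

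Next, fix $u\in C^{0}=C_{\text{b}}^{0}(\R)$. Since $G_{c}\in L^{1}$ and $u\in L^{\infty}$, the convolution $E_{c}u=G_{c}\ast u$ is well defined, bounded and continuous. Differentiating under the integral sign — which is legitimate because every $\del_{x}^{j}G_{c}$ is in $L^{1}$ and $u$ is bounded, so that dominated convergence applies — gives $\del_{x}^{j}(E_{c}u)=(\del_{x}^{j}G_{c})\ast u$ for all $j$, and hence
\[
 \norm{\del_{x}^{j}(E_{c}u)}_{C^{0}}\le \norm{\del_{x}^{j}G_{c}}_{L^{1}}\,\norm{u}_{C^{0}}.
\]
Summing this over $j=0,\dots,m$, and repeating the argument for $E_{0}$, yields $\norm{E_{c}u}_{C^{m}}+\norm{E_{0}u}_{C^{m}}\le C_{m}\norm{u}_{C^{0}}$ with $C_{m}:=\sum_{j=0}^{m}\bigl(\norm{\del_{x}^{j}G_{c}}_{L^{1}}+\norm{\del_{x}^{j}G_{0}}_{L^{1}}\bigr)$. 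Linearity of $E_{c}$ and $E_{0}$ is immediate from linearity of convolution, and continuity from $C^{0}$ to $C^{m}$ then follows because these are bounded linear maps between normed spaces; as a by-product one also sees that $E_{c}u$ and $E_{0}u$ are in fact $C^{\infty}$.

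I expect no serious obstacle in this argument; the two points deserving a little care are the justification of differentiation under the integral (supplied by the $L^{1}$-bounds on \emph{all} derivatives of the Schwartz kernels) and the fact that the convolution is meaningful on $C_{\text{b}}^{0}$ rather than on integrable functions (supplied by $G_{c},G_{0}\in L^{1}$ together with $u\in L^{\infty}$). Alternatively one could argue entirely on the Fourier side, using $\F(E_{c}u)=\chi_{c}\F(u)$ in $\Sw'$ and the fact that multiplication by a compactly supported smooth function sends any tempered distribution to a compactly supported one — hence $E_{c}u$ is smooth — but the direct convolution estimate above is shorter and produces the explicit constant $C_{m}$.
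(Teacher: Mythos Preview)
Your argument is correct and is the standard one: since $\chi_{c},\chi_{0}\in C_{\text{c}}^{\infty}\subset\Sw$, the kernels $G_{c},G_{0}$ are Schwartz functions, all their derivatives are in $L^{1}$, and the $C^{m}$-bound follows from Young's inequality together with differentiation under the integral. The paper itself does not give a proof of this lemma; it simply cites the result from~\cite{Schneider4} (Lemmas~3--5 there), so there is nothing to compare against beyond noting that your direct convolution estimate is exactly the kind of argument one would expect.
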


\begin{lemma}\label{Lem:E0:eps}
 For $n\in\N$ there is a $C_{n}>0$ such that $\norm{(E_{0}^{c}A(\eps\cdot))}_{C^{n}}=\norm{(E_{0}A(\eps \cdot))-A(\eps\cdot)}_{C^{n}}\leq C_{n}\eps^{n}\norm{A}_{C^{n}}$.
\end{lemma}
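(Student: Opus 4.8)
The plan is to reduce the estimate to a $C^{0}$-bound by exploiting that $E_{0}^{c}=E_{0}-\Id$ is a convolution operator, hence commutes with $\del_{x}$, and to extract the powers of $\eps$ from a Taylor expansion together with the fact that the symbol $\chi_{0}^{c}=\chi_{0}-1$ vanishes \emph{identically} in a neighbourhood of the origin.

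First I would record a pointwise formula for $E_{0}^{c}$. Since $\chi_{0}\in C_{\text{c}}^{\infty}(\R)\subset\Sw$ we have $G_{0}\in\Sw$, and for $v\in C_{\text{b}}^{n}(\R)\subset\Sw'$ the convolution $G_{0}\ast v$ is the absolutely convergent integral $x\mapsto\int_{\R}G_{0}(y)v(x-y)\dy$; using $\int_{\R}G_{0}(y)\dy=\chi_{0}(0)=1$ this yields
\begin{equation*}
 E_{0}^{c}v(x)=\int_{\R}G_{0}(y)\bigl(v(x-y)-v(x)\bigr)\dy .
\end{equation*}
Because $\chi_{0}\equiv 1$ on $\ball_{1/8}(0)$, all derivatives $\chi_{0}^{(j)}(0)$ with $j\geq 1$ vanish, so all moments $\int_{\R}y^{j}G_{0}(y)\dy$ vanish for $j\geq 1$ (and are finite for every $j$ since $G_{0}\in\Sw$). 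Subtracting these vanishing Taylor terms and applying Taylor's theorem with Lagrange remainder, I obtain, for $v\in C_{\text{b}}^{r}(\R)$ with $r\geq 1$,
\begin{equation*}
 \norm{E_{0}^{c}v}_{C^{0}}=\sup_{x\in\R}\abs*{\int_{\R}G_{0}(y)\Bigl(v(x-y)-\sum_{j=0}^{r-1}\frac{(-y)^{j}}{j!}v^{(j)}(x)\Bigr)\dy}\leq\frac{\norm{v^{(r)}}_{C^{0}}}{r!}\int_{\R}\abs{y}^{r}\abs{G_{0}(y)}\dy ,
\end{equation*}
while for $r=0$ the trivial bound $\norm{E_{0}^{c}v}_{C^{0}}\leq(\norm{G_{0}}_{L^{1}}+1)\norm{v}_{C^{0}}$ suffices.

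Next I would apply this with $v=\del_{x}^{m}\bigl(A(\eps\cdot)\bigr)=\eps^{m}A^{(m)}(\eps\cdot)$ and $r=n-m$ for each $0\leq m\leq n$. Since $E_{0}^{c}$ commutes with $\del_{x}$ and $\del_{x}^{n-m}\bigl(A^{(m)}(\eps\cdot)\bigr)=\eps^{n-m}A^{(n)}(\eps\cdot)$, the above bounds give
\begin{equation*}
 \norm{\del_{x}^{m}E_{0}^{c}\bigl(A(\eps\cdot)\bigr)}_{C^{0}}=\eps^{m}\norm{E_{0}^{c}\bigl(A^{(m)}(\eps\cdot)\bigr)}_{C^{0}}\leq C_{n-m}\,\eps^{m}\eps^{n-m}\norm{A^{(n)}}_{C^{0}}\leq C_{n-m}\,\eps^{n}\norm{A}_{C^{n}},
\end{equation*}
and summing over $m=0,\dots,n$ yields the claim with $C_{n}=\sum_{m=0}^{n}C_{n-m}$. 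Note that the exponent $n$ produced here is exactly the order of regularity of $A$ that is available, which is why no better power of $\eps$ can be expected. The manipulations above are routine; the only mildly delicate point is the justification that the distributional action of $E_{0}^{c}$ on $C_{\text{b}}^{n}(\R)$ coincides with the integral formula and that $\del_{x}$ may be moved through $E_{0}^{c}$, both of which are standard since $G_{0}$ is a Schwartz function and $A(\eps\cdot)$ together with all its relevant derivatives is bounded. (Alternatively, the statement is Lemma~4 in~\cite{Schneider4}.)
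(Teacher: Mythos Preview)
Your proof is correct; the paper itself gives no argument for this lemma but simply cites it as Lemma~4 of~\cite{Schneider4}, which you also note at the end. The route you take---exploiting that all moments $\int y^{j}G_{0}(y)\dy$ with $j\geq 1$ vanish because $\chi_{0}\equiv 1$ near the origin, then using Taylor's formula with remainder to extract $\eps^{n-m}$ from the $C^{0}$-estimate of $E_{0}^{c}$ applied to $\del_{x}^{m}(A(\eps\cdot))$---is the standard one and is exactly the argument in~\cite{Schneider4}, so there is nothing to compare.
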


\begin{lemma}\label{Lem:Ec:vanish}
 For $u_{1},u_{2}\in C^{n}$ and $r_{1},r_{2}\in\N$ it is true that 
 \begin{equation*}
  E_{c}(\del_{x}^{r_{1}}E_{c}u_{1}\cdot \del_{x}^{r_{2}}E_{c}u_{2})=0.
 \end{equation*}
\end{lemma}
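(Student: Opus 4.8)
The plan is to work on the Fourier side, where the statement becomes transparent: $E_c$ localises the spectrum of any function to the $1/4$-neighbourhood $\ball_{1/4}(-1)\cup\ball_{1/4}(1)$ of the critical wave numbers $\pm1$, and a product of two functions with such spectra has its spectrum concentrated near $0$ and $\pm2$, hence disjoint from $\supp\chi_c$.

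First I would observe that, since $\chi_c\in C_{\text{c}}^\infty(\R)$, its inverse Fourier transform $G_c$ lies in $\Sw$, so by Lemma~\ref{Lem:smoothing} each $E_c u_j$ ($j=1,2$) is smooth with globally bounded derivatives; regarding $u_1,u_2$ as elements of $\Sw'$ via the usual embedding, the functions $w_j\vcc=\del_x^{r_j}E_c u_j$ and their product $w\vcc=w_1w_2$ are thus well-defined tempered distributions (smooth and polynomially bounded). From $\F(E_c u_j)=\chi_c\F(u_j)$ one gets $\F(w_j)=(\im k)^{r_j}\chi_c\F(u_j)$, so that
\begin{equation*}
 \supp\F(w_j)\subseteq\supp\chi_c\subseteq \ball_{1/4}(-1)\cup \ball_{1/4}(1),\qquad j=1,2.
\end{equation*}

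Next, since both spectra are compact, the Fourier transform of the product is a convolution of compactly supported distributions, $\F(w)=\tfrac{1}{2\pi}\,\F(w_1)\ast\F(w_2)$, whence $\supp\F(w)$ is contained in the Minkowski sum $\supp\F(w_1)+\supp\F(w_2)$. Computing the sums of the constituent intervals gives
\begin{equation*}
 \bigl(\ball_{1/4}(-1)\cup \ball_{1/4}(1)\bigr)+\bigl(\ball_{1/4}(-1)\cup \ball_{1/4}(1)\bigr)=\ball_{1/2}(-2)\cup \ball_{1/2}(0)\cup \ball_{1/2}(2).
\end{equation*}
Since $\ball_{1/2}(-2)\cup\ball_{1/2}(0)\cup\ball_{1/2}(2)=(-\tfrac52,-\tfrac32)\cup(-\tfrac12,\tfrac12)\cup(\tfrac32,\tfrac52)$ does not meet $\supp\chi_c\subseteq(-\tfrac54,-\tfrac34)\cup(\tfrac34,\tfrac54)$, we obtain $\chi_c\F(w)=0$, and since $\F(E_c w)=\chi_c\F(w)$ this yields $E_c w=0$, i.e.\ $E_c(\del_x^{r_1}E_c u_1\cdot\del_x^{r_2}E_c u_2)=0$, as claimed.

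The only point requiring genuine care is the identity $\F(w_1w_2)=\tfrac1{2\pi}\F(w_1)\ast\F(w_2)$ together with the support inclusion $\supp\F(w_1w_2)\subseteq\supp\F(w_1)+\supp\F(w_2)$: this is the Paley--Wiener--Schwartz fact that the spectrum of a product of two smooth functions with compactly supported spectra lies in the sum of the spectra, and its application rests on knowing that $w_1$, $w_2$ and $w_1w_2$ really are tempered distributions, which is exactly what the $C_{\text{b}}$-bounds in Lemma~\ref{Lem:smoothing} provide. The remaining ingredients — the behaviour of $\F$ under $\del_x^{r_j}$ and the elementary interval arithmetic for the Minkowski sum — are routine.
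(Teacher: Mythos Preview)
Your proof is correct and follows exactly the natural Fourier-support argument. Note that the paper does not give its own proof of this lemma---it is cited from \cite{Schneider4}---but the paper's proof of the generalisation \cref{Lem:cancel:critical} uses precisely the same mechanism (localise the spectra of the two factors to $\overline{\ball}_{1/4}(\pm1)$, take the Minkowski sum to get $\overline{\ball}_{1/2}(0)\cup\overline{\ball}_{1/2}(\pm2)$, observe this misses $\supp\chi_c$), so your argument is in line with the paper's own reasoning.
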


The last statement essentially says that the product of two functions with critical Fourier modes only contains uncritical modes. However, since we have to deal with non-linearities which are in general convolutions, we will need an extension of Lemma~\ref{Lem:Ec:vanish}. In order to proof this, we also require the following well-known result about the convolution of distributions (see for example \cite{DuK10}).

\begin{lemma}\label{Lem:convolution:distribution}
 Let $u,v\in\Sw'$ and assume that either $u$ or $v$ has compact support. Then the convolution $u\ast v$ exists in $\Sw'$ and moreover, it holds $\F(u\ast v)=\F(u)\F(v)$. This means in particular that the product on the right-hand side exists in $\Sw'$.
\end{lemma}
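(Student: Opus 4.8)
This is a classical fact, so my plan is only to sketch the standard argument. Without loss of generality I would assume that $v$ has compact support. The essential point is the Paley--Wiener--Schwartz theorem: since $v\in\Sw'$ has compact support, its Fourier transform $\F(v)$ is (represented by) a smooth function which, together with all its derivatives, grows at most polynomially. Indeed, for any multi-index $\alpha$ one has $\del^{\alpha}\F(v)=\F((-\im x)^{\alpha}v)$, and $(-\im x)^{\alpha}v$ is again a compactly supported distribution; combining this with the elementary bound $\abs{\F(w)(\xi)}\leq C_{w}(1+\abs{\xi})^{N_{w}}$, valid for every compactly supported distribution $w$ of order $N_{w}$, shows that every derivative of $\F(v)$ is polynomially bounded. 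In particular $\F(v)$ is a multiplier on $\Sw$, i.e.\@ $\F(v)\varphi\in\Sw$ for all $\varphi\in\Sw$ with continuous dependence in the Schwartz seminorms, hence also a multiplier on $\Sw'$ by duality.

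Given this, I would \emph{define} the convolution by
\[
 u\ast v\vcc=\F^{-1}\bigl(\F(u)\F(v)\bigr),
\]
where $\F(u)\F(v)\in\Sw'$ is the product of the tempered distribution $\F(u)$ with the multiplier $\F(v)$, namely $\langle\F(u)\F(v),\varphi\rangle=\langle\F(u),\F(v)\varphi\rangle$. With this definition the identity $\F(u\ast v)=\F(u)\F(v)$ holds by construction, and the right-hand side manifestly lies in $\Sw'$. It then remains to check that this coincides with the usual convolution. For $\varphi\in\Sw$ set $(\varphi\ast\tilde v)(x)\vcc=\langle v(y),\varphi(x+y)\rangle$, where $\tilde v(y)=v(-y)$; using the compact support of $v$ together with the rapid decay of $\varphi$ and all its derivatives, one verifies $\varphi\ast\tilde v\in\Sw$ and that $\varphi\mapsto\varphi\ast\tilde v$ is continuous on $\Sw$, so that $\varphi\mapsto\langle u,\varphi\ast\tilde v\rangle$ is a well-defined element of $\Sw'$ --- this is the ``direct'' definition of $u\ast v$. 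A short computation (most easily carried out first for $u\in\Sw$, where everything is an absolutely convergent integral and Fubini applies, and then extended to general $u\in\Sw'$ by weak-$\ast$ density of $\Sw$ in $\Sw'$) shows that its Fourier transform equals $\F(u)\F(v)$, so it agrees with the object defined above.

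The one genuine ingredient is the Paley--Wiener--Schwartz estimate in the first step, in particular the polynomial growth of \emph{all} derivatives of $\F(v)$, which is exactly what licenses its use as a multiplier; everything else is bookkeeping with Schwartz seminorms plus a density/Fubini argument. Since this material is entirely standard, in the paper I would simply invoke it with the reference already given rather than reproduce the proof.
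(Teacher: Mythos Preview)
Your proposal is correct and matches the paper's treatment: the paper states this as a well-known result and simply cites \cite{DuK10} without giving any proof. Your additional sketch of the Paley--Wiener--Schwartz/multiplier argument is standard and accurate, but the paper does not include it.
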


\begin{remark}\label{Rem:product:distributions}
 As a consequence of Lemma~\ref{Lem:convolution:distribution} it also holds that $\F(uv)=(2\pi)^{-1}\F(u)\ast \F(v)$ provided that $u,v\in\Sw'$ such that either $\F(u)$ or $\F(v)$ has compact support.
\end{remark}

We can then show the following generalisation of Lemma~\ref{Lem:Ec:vanish}.

\begin{lemma}\label{Lem:cancel:critical}
 For all $n\in\Z$ and $B_{1}, B_{2}\in\Sw'$ with Fourier transform supported in $\overline{\ball}_{1/4}(-1)\cup \overline{\ball}_{1/4}(1)$ it holds 
 \begin{equation*}
  E_{c}\bigl(B_{1}(Q\ee^{n\im \cdot})\ast B_{2}\bigr)=0.
 \end{equation*}
 In particular $B_{1}(Q\ee^{n\im \cdot})\ast B_{2}\in\Sw'$ is well-defined. 
\end{lemma}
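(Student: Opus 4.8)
The plan is to compute the Fourier transform of $B_{1}(Q\ee^{n\im\cdot})\ast B_{2}$ and check that its support avoids a neighbourhood of $\pm 1$, which forces the product with $\chi_{c}$ to vanish. First I would record the effect of the modulation: multiplying $Q$ by $\ee^{\im n\cdot}$ translates its Fourier transform by $n$, i.e.\ $\F(Q\ee^{\im n\cdot})(k)=\widehat{Q}(k-n)$, where $\widehat{Q}$ is a bounded continuous function since $Q$ is a finite measure. Next, since $\F(B_{2})$ has compact support (contained in $\overline{\ball}_{1/4}(-1)\cup\overline{\ball}_{1/4}(1)$), Lemma~\ref{Lem:convolution:distribution} applies to the convolution $(Q\ee^{\im n\cdot})\ast B_{2}$ and gives $\F\bigl((Q\ee^{\im n\cdot})\ast B_{2}\bigr)=\widehat{Q}(\cdot-n)\,\F(B_{2})$, a compactly supported distribution with support still inside $\overline{\ball}_{1/4}(-1)\cup\overline{\ball}_{1/4}(1)$ (multiplication by the smooth-enough function $\widehat{Q}(\cdot-n)$ does not enlarge support). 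In particular this object is a compactly supported element of $\Sw'$, so by Remark~\ref{Rem:product:distributions} the product $B_{1}\cdot\bigl((Q\ee^{\im n\cdot})\ast B_{2}\bigr)$ is well-defined in $\Sw'$ with Fourier transform $(2\pi)^{-1}\F(B_{1})\ast\F\bigl((Q\ee^{\im n\cdot})\ast B_{2}\bigr)$; this establishes the ``in particular'' clause.

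The core of the argument is then the support bookkeeping for that last convolution. Since $\F(B_{1})$ is supported in $\overline{\ball}_{1/4}(-1)\cup\overline{\ball}_{1/4}(1)$ and $\F\bigl((Q\ee^{\im n\cdot})\ast B_{2}\bigr)$ is supported in the same set, the convolution of the two is supported in the sumset
\begin{equation*}
 \bigl(\overline{\ball}_{1/4}(-1)\cup\overline{\ball}_{1/4}(1)\bigr)+\bigl(\overline{\ball}_{1/4}(-1)\cup\overline{\ball}_{1/4}(1)\bigr)\subseteq \overline{\ball}_{1/2}(-2)\cup\overline{\ball}_{1/2}(0)\cup\overline{\ball}_{1/2}(2).
\end{equation*}
This set is disjoint from $\overline{\ball}_{1/4}(-1)\cup\overline{\ball}_{1/4}(1)\supseteq\supp\chi_{c}$, since the nearest points are at distance $1/4$ apart. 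Hence $\chi_{c}\cdot\F\bigl(B_{1}(Q\ee^{\im n\cdot})\ast B_{2}\bigr)=0$, and applying the inverse Fourier transform gives $E_{c}\bigl(B_{1}(Q\ee^{\im n\cdot})\ast B_{2}\bigr)=0$ as claimed.

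The only real subtlety I anticipate is making the two support-nonincreasing steps rigorous at the level of tempered distributions: multiplying a compactly supported distribution by $\widehat{Q}(\cdot-n)$ (which is merely bounded and continuous, not Schwartz) and convolving two compactly supported distributions. The first is handled by noting that $\widehat{Q}(\cdot-n)$ is $C^{\infty}$ when $Q\in\Sw$, and in general by first splitting $Q=\Qr+\Qs$ via~\eqref{eq:QK:decomposition}: for $\Qr\in\Sw$ the multiplier is smooth, and for the compactly supported part $\Qs$ one uses that $\widehat{\Qs}(\cdot-n)$ is a (band-unlimited but) $C^{\infty}$ function of polynomial growth, so multiplication by it preserves $\Sw'$ and does not increase support of a compactly supported distribution; alternatively one observes directly that $\supp\bigl((Q\ee^{\im n\cdot})\ast B_{2}\bigr)\subseteq\supp Q+\supp B_{2}$ only helps in physical space, so the clean route is the Fourier-side statement $\supp(fg)\subseteq\supp f\cap\supp g$ for the product of a distribution with a $C^{\infty}$ function. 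The convolution step is then exactly Lemma~\ref{Lem:convolution:distribution} together with the standard fact $\supp(u\ast v)\subseteq\supp u+\supp v$ for compactly supported $u,v$. Everything else is the elementary interval arithmetic displayed above.
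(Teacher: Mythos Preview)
Your proposal is correct and follows essentially the same route as the paper: compute the Fourier transform via Lemma~\ref{Lem:convolution:distribution} and Remark~\ref{Rem:product:distributions}, observe that multiplication by $\widehat{Q}(\cdot-n)$ does not enlarge the support of $\F(B_{2})$, and then do the sumset arithmetic to see that the resulting support misses $\supp\chi_{c}$. Two minor remarks: (i) your sumset $\overline{\ball}_{1/2}(-2)\cup\overline{\ball}_{1/2}(0)\cup\overline{\ball}_{1/2}(2)$ is the right one---the paper writes $\pm1$ in place of $\pm2$, which is a typo since $\chi_{c}$ does not vanish near $\pm1$; (ii) your care about why $\widehat{Q}(\cdot-n)$ is a legitimate multiplier is well placed, and the decomposition~\eqref{eq:QK:decomposition} indeed yields $\widehat{Q}\in C^{\infty}$ of polynomial growth (Schwartz part plus Fourier transform of a compactly supported measure), so the product with a compactly supported tempered distribution is unproblematic.
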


\begin{proof}
 Due to the assumptions on $Q$ it is well-known that $\F(Q\ee^{\im n\cdot})\in C^{\infty}$. Thus, since $\F(B_{1})$ and $\F(B_{2})$ are assumed to have compact support, Lemma~\ref{Lem:convolution:distribution} yields that $B_{1}(Q\ee^{n\im \cdot})\ast B_{2}\in \Sw'$ exists and
 \begin{equation}\label{eq:cancel:crit:1}
  \F\bigl(B_{1}(Q\ee^{n\im \cdot})\ast B_{2}\bigr)=\F(B_{1})\ast \bigl(\F(Q\ee^{n\im \cdot})\F(B_{2})\bigr).
 \end{equation}
 Since $\F(B_{2})$ is supported in $\overline{\ball}_{1/4}(-1)\cup \overline{\ball}_{1/4}(1)$, the same is true for $\F(Q\ee^{n\im \cdot})\F(B_{2})$ as well as for $\F(B_{1})$ by assumption. Thus, we immediately obtain from~\eqref{eq:cancel:crit:1} that the support of $\F\bigl(B_{1}(Q\ee^{n\im \cdot})\ast B_{2}\bigr)$ is contained in 
 $\Omega\vcc=\overline{\ball}_{1/2}(-1)\cup \overline{\ball}_{1/2}(0)\cup \overline{\ball}_{1/2}(1)$ while $\chi_{c}\equiv 0$ on $\Omega$. Thus the claim immediately follows from the definition of $E_{c}$.
\end{proof}

In a similar fashion, we have the following result which provides information on the support in Fourier space for the operators induced by $Q$ and $K$.

\begin{lemma}\label{Lem:support:Fourier}
 For all $n\in\Z$ and $B_{1}, B_{2}, B_{3}\in\Sw'$ with Fourier transform supported in $\overline{\ball}_{1/4}(0)$ the expressions $B_{1}(Q\ee^{n\im \cdot})\ast B_{2}$ and $B_{1}(K\ee^{n\im \cdot})\ast (B_{2}B_{3})$ are well-defined in $\Sw'$ and it holds
 \begin{equation*}
  \supp \F\bigl(B_{1}(Q\ee^{n\im \cdot})\ast B_{2}\bigr)\subset \overline{\ball}_{1/2}(0)\quad \text{and}\quad \supp \F\bigl(B_{1}(K\ee^{n\im \cdot})\ast (B_{2}B_{3})\bigr)\subset \overline{\ball}_{3/4}(0).
 \end{equation*}
\end{lemma}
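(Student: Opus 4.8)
The plan is to follow the proof of Lemma~\ref{Lem:cancel:critical} and simply keep track of the supports in Fourier space. First I would collect the elementary facts that make the products and convolutions of tempered distributions involved legitimate. By the decomposition~\eqref{eq:QK:decomposition}, writing $Q=\Qr+\Qs$ with $\Qr\in\Sw$ and $\Qs$ a compactly supported finite measure, one has $\F(\Qr)\in\Sw$ and $\F(\Qs)\in C^{\infty}$ with all derivatives bounded (its $k$-th derivative is bounded by $\int_{\R}\abs{x}^{k}\abs{\Qs}(\dx)<\infty$); hence $\F(Q)$, and likewise $\F(K)$, is $C^{\infty}$ with bounded derivatives. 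Since $\F(Q\ee^{n\im\cdot})$ and $\F(K\ee^{n\im\cdot})$ are translates of $\F(Q)$ and $\F(K)$, the same is true for them, so they act as multipliers on $\Sw'$. On the other side, $\F(B_{1})$, $\F(B_{2})$, $\F(B_{3})$ are compactly supported distributions (supported in $\overline{\ball}_{1/4}(0)$), so $B_{1},B_{2},B_{3}$ are in particular smooth functions of polynomial growth.

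For the quadratic term I would proceed in two steps, exactly as in the proof of Lemma~\ref{Lem:cancel:critical}. The product $\F(Q\ee^{n\im\cdot})\,\F(B_{2})$ of the $C^{\infty}$-multiplier $\F(Q\ee^{n\im\cdot})$ with the compactly supported distribution $\F(B_{2})$ is again a compactly supported tempered distribution, with support contained in $\supp\F(B_{2})\subset\overline{\ball}_{1/4}(0)$; consequently $(Q\ee^{n\im\cdot})\ast B_{2}\vcc=\F^{-1}\bigl(\F(Q\ee^{n\im\cdot})\,\F(B_{2})\bigr)$ is well-defined in $\Sw'$ and its Fourier transform is supported in $\overline{\ball}_{1/4}(0)$. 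Now both $B_{1}$ and $(Q\ee^{n\im\cdot})\ast B_{2}$ have Fourier transforms with support in $\overline{\ball}_{1/4}(0)$, so Remark~\ref{Rem:product:distributions} applies and gives that $B_{1}(Q\ee^{n\im\cdot})\ast B_{2}\in\Sw'$ with
\[
 \F\bigl(B_{1}(Q\ee^{n\im\cdot})\ast B_{2}\bigr)=(2\pi)^{-1}\,\F(B_{1})\ast\bigl(\F(Q\ee^{n\im\cdot})\,\F(B_{2})\bigr),
\]
whose support lies in $\overline{\ball}_{1/4}(0)+\overline{\ball}_{1/4}(0)=\overline{\ball}_{1/2}(0)$, as claimed.

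For the cubic term I would iterate the same bookkeeping once more. By Remark~\ref{Rem:product:distributions}, $\F(B_{2}B_{3})=(2\pi)^{-1}\F(B_{2})\ast\F(B_{3})$ has support in $\overline{\ball}_{1/2}(0)$; multiplying by the $C^{\infty}$-multiplier $\F(K\ee^{n\im\cdot})$ does not enlarge the support, so $(K\ee^{n\im\cdot})\ast(B_{2}B_{3})\vcc=\F^{-1}\bigl(\F(K\ee^{n\im\cdot})\,\F(B_{2}B_{3})\bigr)$ has Fourier transform supported in $\overline{\ball}_{1/2}(0)$; and forming the product with $B_{1}$ convolves once more with $\F(B_{1})$, enlarging the support to $\overline{\ball}_{1/4}(0)+\overline{\ball}_{1/2}(0)=\overline{\ball}_{3/4}(0)$. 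The only point requiring care is to check at each stage of this chain that one factor has compact spectral support while the other is either a $C^{\infty}$-multiplier or also has compact spectral support — which is precisely what the hypotheses on $B_{1},B_{2},B_{3}$ together with the decomposition~\eqref{eq:QK:decomposition} supply — so beyond this routine sum-of-supports arithmetic there is no real obstacle.
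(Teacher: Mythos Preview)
Your proposal is correct and follows essentially the same route as the paper: compute the Fourier transform via Lemma~\ref{Lem:convolution:distribution} and Remark~\ref{Rem:product:distributions} and then read off the support from the sum-of-supports rule for convolution. You are in fact more careful than the paper in one respect, explicitly invoking the decomposition~\eqref{eq:QK:decomposition} to justify that $\F(Q\ee^{n\im\cdot})$ and $\F(K\ee^{n\im\cdot})$ are $C^{\infty}$-multipliers, whereas the paper simply asserts this ``due to the assumptions on $Q$''.
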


\begin{proof}
 Similarly as in the proof of Lemma~\ref{Lem:cancel:critical} one finds together with \cref{Lem:convolution:distribution,Rem:product:distributions} that $B_{1}(Q\ee^{n\im \cdot})\ast B_{2}$ and $B_{1}(K\ee^{n\im \cdot})\ast (B_{2}B_{3})$ are well-defined and it holds
 \begin{align*}
  \F\bigl(B_{1}(Q\ee^{n\im \cdot})\ast B_{2}\bigr)&=\F(B_{1})\ast \bigl(\F(Q\ee^{n\im \cdot})\F(B_{2})\bigr)
  \shortintertext{and}
  \F\bigl(B_{1}(K\ee^{n\im \cdot})\ast (B_{2}B_{3})\bigr)&=\F(B_{1})\ast \bigl(\F(K\ee^{\im n\cdot})(\F(B_{2})\ast \F(B_{3})\bigr).
 \end{align*}
From these relations, the claim immediately follows due to the assumptions on the support of $\F(B_{1})$, $\F(B_{2})$ and $\F(B_{3})$.
\end{proof}

For later use, we also recall the following semi-group estimates which are stated in~\cite{Schneider4}.

\begin{lemma}\label{Lem:semi:group}
 Let $\ee^{\Lin t}$ denote the semi-group associated to the operator $\Lin$. Then there exist constants $C,\sigma>0$ which are independent of $\eps$ such that it holds
 \begin{equation*}
  \norm{\ee^{\Lin t}E_{c}^{h}}_{\mathcal{L}(C^{1},C^{4})}\leq C\ee^{\eps^{2} t}\quad \text{and}\quad \norm{\ee^{\Lin t}E_{s}^{h}}_{\mathcal{L}(C^{1},C^{4})}\leq C\ee^{-\sigma t}\max\{1,t^{-3/4}\}.
 \end{equation*}
\end{lemma}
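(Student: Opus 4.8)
\textbf{Proof proposal for Lemma~\ref{Lem:semi:group}.}

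The plan is to work entirely on the Fourier side, since $\Lin$ is a Fourier multiplier: $\F(\Lin v)(k) = \lambda(k)\F(v)(k)$ with symbol $\lambda(k) = -(1-k^2)^2 + \eps^2$. Hence $\ee^{\Lin t}$ is the multiplier with symbol $\ee^{\lambda(k)t}$, and the two operators to be estimated are the Fourier multipliers with symbols $m_c(t,k) = \ee^{\lambda(k)t}\chi_c^h(k)$ and $m_s(t,k) = \ee^{\lambda(k)t}\chi_s^h(k)$, respectively. First I would reduce the $\mathcal{L}(C^1,C^4)$ operator norm to a weighted $L^1$-norm of the inverse Fourier transform of the symbol: writing $\ee^{\Lin t}E_\bullet^h v = \Gamma_\bullet(t,\cdot)\ast v$ with $\Gamma_\bullet(t,\cdot) = \F^{-1}(m_\bullet(t,\cdot))$, one has for any multi-indices $|\alpha|\le 4$ that $\del_x^\alpha(\Gamma_\bullet\ast v) = \Gamma_\bullet\ast \del_x^{\alpha-\beta} v\cdot(\text{derivatives distributed})$; more cleanly, $\|\Gamma_\bullet\ast v\|_{C^4}\lesssim \big(\sum_{|\alpha|\le 4}\|\del_x^\alpha \Gamma_\bullet(t,\cdot)\|_{L^1}\big)\|v\|_{C^0}$ and also, using one derivative on $v$, $\lesssim \big(\sum_{|\alpha|\le 3}\|\del_x^\alpha \Gamma_\bullet(t,\cdot)\|_{L^1}\big)\|v\|_{C^1}$. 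Since $\del_x^\alpha \Gamma_\bullet(t,\cdot) = \F^{-1}\big((\im k)^\alpha m_\bullet(t,k)\big)$, it suffices to bound $\|\F^{-1}\big(k^j m_\bullet(t,k)\big)\|_{L^1}$ for $j = 0,\dots,4$.

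For the critical part $E_c^h$: the symbol $\chi_c^h$ is supported in $\ball_{3/8}(-1)\cup\ball_{3/8}(1)$, a compact set bounded away from $k=0$, on which $\lambda(k) = -(1-k^2)^2 + \eps^2 \le \eps^2$, so $\ee^{\lambda(k)t}\le \ee^{\eps^2 t}$. On this fixed compact set the function $k\mapsto k^j\ee^{\lambda(k)t}\chi_c^h(k)$ and all its $k$-derivatives up to any fixed order are bounded by $C\ee^{\eps^2 t}$ uniformly in $t\ge 0$ and $\eps$ (the $t$-derivatives of $\ee^{\lambda(k)t}$ bring down powers of $\lambda(k)t$, but $\lambda(k)\le \eps^2$ and also $\lambda(k)\ge -c$ on the compact set, so $|\lambda(k)|^m t^m \ee^{\lambda(k)t}$ is bounded once we factor out $\ee^{\eps^2 t}$ — indeed $\sup_{s\le \eps^2 t} (c+s)^m \ee^{s - \eps^2 t}\cdot\ee^{\eps^2 t}$ is controlled). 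Since the symbol is a compactly supported $C^\infty$ function with derivatives bounded by $C\ee^{\eps^2 t}$, its inverse Fourier transform is Schwartz with $L^1$-norm bounded by $C\ee^{\eps^2 t}$ (integrate by parts twice against $(1+x^2)^{-1}$). This gives the first estimate.

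For the stable part $E_s^h$: the symbol $\chi_s^h$ vanishes on $\ball_{1/16}(-1)\cup\ball_{1/16}(1)$, so on $\supp\chi_s^h$ we are bounded away from the two critical wave numbers and there $\lambda(k)\le -2\sigma$ for a suitable $\sigma>0$ (uniformly for $\eps$ small, say $\eps^2\le\sigma$), which yields the exponential factor $\ee^{-\sigma t}$ after extracting one factor $\ee^{-\sigma t}$ and controlling the remaining $\ee^{(\lambda(k)+\sigma)t}$-weighted multiplier. The subtlety is that $\supp\chi_s^h$ is unbounded, so I would split it into a bounded part $\{|k|\le M\}$ and a far-field part $\{|k|\ge M\}$. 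On the bounded part the argument is exactly as in the critical case and produces a bound $C\ee^{-\sigma t}$ with no singularity at $t=0$. On the far-field part, where $\lambda(k)\sim -k^4$, one uses the standard parabolic-type smoothing estimate: $\|\F^{-1}(k^j\ee^{\lambda(k)t}\mathbf{1}_{|k|\ge M})\|_{L^1}\lesssim \ee^{-\sigma t}\,t^{-(j+1)/4}$ by the change of variables $k = t^{-1/4}\eta$ (for $t\le 1$), giving the worst power $t^{-3/4}$ from $j \le 3$ acting on $\|v\|_{C^1}$, while for $t\ge 1$ the bounded-part-type estimate applies and gives a clean $\ee^{-\sigma t}$; combining yields the factor $\max\{1,t^{-3/4}\}$. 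The main obstacle is making the $\eps$-independence transparent and rigorous simultaneously in both estimates — in particular choosing $\sigma$ and verifying that $\lambda(k)\le\eps^2$ on $\supp\chi_c^h$ and $\lambda(k)\le-2\sigma$ on $\supp\chi_s^h$ hold with constants not deteriorating as $\eps\to 0$, and carefully tracking how many $v$-derivatives to spend so that the singular power at $t=0$ is exactly $3/4$ and not worse. Since these are precisely the statements proven in~\cite{Schneider4} (Lemmas~3--5 and the associated semigroup bounds) for the identical linear operator, I would ultimately cite that reference for the detailed computation, having reduced everything above to symbol estimates that are independent of the non-local non-linearity.
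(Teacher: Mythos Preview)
The paper does not give a proof of this lemma at all: it merely states the estimates and refers to~\cite{Schneider4}, since the linear operator $\Lin$ is exactly the one treated there and is completely unaffected by the non-local non-linearity. Your concluding sentence---deferring the detailed computation to~\cite{Schneider4} after reducing to pure symbol estimates---therefore matches the paper's treatment exactly.

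That said, the sketch you give for the critical part contains a genuine error. You assert that on $\supp\chi_c^h$ all $k$-derivatives of $k^j\ee^{\lambda(k)t}\chi_c^h(k)$ are bounded by $C\ee^{\eps^2 t}$ uniformly in $t$, and you justify this by controlling $|\lambda(k)|^m t^m \ee^{\lambda(k)t}$. But differentiating $\ee^{\lambda(k)t}$ in $k$ brings down factors of $\lambda'(k)t$, not $\lambda(k)t$. Since $\lambda'(k)=4k(1-k^2)$ vanishes only to first order at $k=\pm 1$ while $\lambda(k)-\eps^2=-(1-k^2)^2$ vanishes to second order, the second $k$-derivative of $\ee^{(\lambda(k)-\eps^2)t}$ at $k=1$ equals $g''(1)t=-8t$, which is unbounded in $t$. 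More generally $(\lambda'(k)t)^2\ee^{(\lambda(k)-\eps^2)t}\sim (k-1)^2t^2\ee^{-4(k-1)^2 t}$ attains values of order $t$ near $|k-1|\sim t^{-1/2}$. So the pointwise (and even the $L^1$) bound on $\partial_k^2 m_c$ that you need for your two-integrations-by-parts argument is simply false.

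The fix is to rescale first: with $k=1+\eta\,t^{-1/2}$ one has $\ee^{-(1-k^2)^2 t}=\ee^{-\eta^2(2+\eta t^{-1/2})^2}$, whose $\eta$-derivatives are uniformly bounded and Gaussian-decaying for $t\ge 1$ (the terms carrying $(\chi_c^h)'$ are supported away from $k=\pm 1$ and are exponentially small). After this rescaling the $L^1$-bound on the inverse Fourier transform follows exactly as you argue, and the scaling is invisible in the final $L^1$-norm. This is the same parabolic rescaling you correctly invoke for the far-field stable part; it is equally indispensable here.
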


 \section{The residuum}\label{Sec:residuum}
 
 In this section, we will compute the residuum as defined in~\eqref{eq:Res} and moreover, we will derive several estimates which we will need for the proof of the main statement.
 
 \subsection{Computing the residuum}
 
 Since we only need estimates on the $C^{1}$-norm of $\Res(\phi)$ one can easily verify, that the assumptions of Theorem~\ref{Thm:main} together with Lemma~\ref{Lem:smoothing} yield that all derivatives which occur during the computation of $\Res(\phi)$ are uniformly bounded on the relevant time interval. More precisely, this is immediately clear for the purely spatial derivatives. However, the following lemma states that also the $C^{1}$-norm of the time derivative is uniformly bounded. 
 
 \begin{lemma}\label{Lem:est:time:derivative}
 Let $A\in C^{0}([0,T_{*}],C_{\text{b}}^{4}(\R))$ be a solution of~\eqref{eq:GL} and $A_{0}$ and $A_{2}$ be given as in~\eqref{eq:A2:A0}. Then it holds
 \begin{equation*}
  \norm{\del_{T}A_{0}}_{C^{1}}+\norm{\del_{T}A_{2}}_{C^{1}}\leq C\bigl(\norm{A}_{C^{3}}+\norm{A}_{C^{1}}^{3}\bigr)\norm{A}_{C^{1}}
 \end{equation*}
 for some constant $C>0$.
\end{lemma}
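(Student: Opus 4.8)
The plan is to differentiate the algebraic relations $A_{0}=-2q_{1}\abs{A}^{2}$ and $A_{2}=-\frac{q_{1}}{9}A^{2}$ from~\eqref{eq:A2:A0} with respect to the slow time $T$ and substitute the Ginzburg--Landau equation~\eqref{eq:GL} for $\del_{T}A$. For $A_{2}$ this gives $\del_{T}A_{2}=-\frac{2q_{1}}{9}A\,\del_{T}A$, and for $A_{0}$ the product rule yields $\del_{T}A_{0}=-2q_{1}(\del_{T}A\cdot \bar A+A\cdot\del_{T}\bar A)$. Hence everything reduces to estimating $A\,\del_{T}A$ and its first spatial derivative in terms of norms of $A$ alone.

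Next I would insert the right-hand side of~\eqref{eq:GL}, namely $\del_{T}A=(1+4\del_{X}^{2})A-c\abs{A}^{2}A$ with $c$ the (fixed, real) cubic coefficient appearing in~\eqref{eq:GL}. This expresses $\del_{T}A$ as a sum of a term involving up to two spatial derivatives of $A$ and a cubic term, so $\norm{\del_{T}A}_{C^{1}}\leq C(\norm{A}_{C^{3}}+\norm{A}_{C^{1}}^{3})$, where the $C^{3}$ comes from needing one derivative of $4\del_{X}^{2}A$. Multiplying by $A$ and using the submultiplicativity of the $C^{1}$-norm (Leibniz' rule), $\norm{A\,\del_{T}A}_{C^{1}}\leq C\norm{A}_{C^{1}}\norm{\del_{T}A}_{C^{1}}\leq C(\norm{A}_{C^{3}}+\norm{A}_{C^{1}}^{3})\norm{A}_{C^{1}}$; the same bound holds for $\bar A\,\del_{T}\bar A$ and for $A^{2}\,\del_{T}A/A$-type products after accounting for conjugates. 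Summing the contributions from $A_{0}$ and $A_{2}$ and absorbing the constants $q_{1}$ into $C$ gives exactly the claimed inequality.

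The argument is essentially bookkeeping; there is no real obstacle, only the need to be careful that (i) the highest derivative count is $C^{3}$ rather than $C^{4}$, which is fine since $A\in C([0,T_{*}],C^{4}_{\mathrm{b}})$ by hypothesis and all the $C^{3}$-norms are therefore finite and continuous in $T$; and (ii) the cubic term contributes $\norm{A}_{C^{1}}^{3}$ and, after the extra factor of $A$, the total homogeneity is quartic, matching the $\norm{A}_{C^{1}}^{3}\cdot\norm{A}_{C^{1}}$ structure on the right-hand side. The only mildly delicate point is making sure the $\abs{A}^{2}$ in $A_{0}=-2q_{1}\abs{A}^{2}$ is differentiated correctly as $A\bar A$, so that both $\del_{T}A$ and $\del_{T}\bar A$ appear; since $\overline{\del_{T}A}=\del_{T}\bar A$ and $\abs{A}\leq\norm{A}_{C^{1}}$, this does not change the final estimate. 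I would present this in two or three lines of computation followed by the norm bound, without grinding through the explicit constants.
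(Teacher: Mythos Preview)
Your proposal is correct and follows essentially the same approach as the paper: differentiate the algebraic relations \eqref{eq:A2:A0} in $T$, substitute the Ginzburg--Landau equation \eqref{eq:GL} for $\del_{T}A$ (and its conjugate), and conclude via the submultiplicativity of the $C^{1}$-norm. The paper's proof is terser, merely computing $\del_{T}A_{0}=-2q_{1}(\del_{T}A\,\bar{A}+A\,\del_{T}\bar{A})$ and $\del_{T}A_{2}=-\tfrac{2}{9}q_{1}A\,\del_{T}A$ and then stating that the claim follows since $A$ and $\bar{A}$ solve \eqref{eq:GL}; your version spells out the $C^{1}$-bookkeeping explicitly, which is fine.
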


\begin{proof}
Due to~\eqref{eq:A2:A0} it holds $A_{0}=-2q_{1}A\bar{A}$ and $A_{2}=-(q_{1}A^2)/9$. Thus, we have
\begin{equation*}
 \del_{T}A_{0}=-2q_{1}(\del_{T}A\bar{A}+A\del_{T}\bar{A})\quad \text{and}\quad \del_{T}A_{2}=-\frac{2}{9}q_{1}A\del_{T}A.
\end{equation*}
Since both $A$ and $\bar{A}$ solve~\eqref{eq:GL} the claim easily follows.
\end{proof}
 
As a consequence, it suffices to consider only terms up to $\Ord(\eps^{3})$ and we will therefore only compute explicitly these terms while all expressions of $\Ord(\eps^{4})$ are just estimated by a constant.

To simplify the presentation, we first compute the different expressions separately and then finally collect all the terms. Moreover, we skip the argument of the functions in order to improve the readability and we use the common notation $\cc$ to indicate complex conjugate. First of all, we obtain
 \begin{equation*}
  \del_{t}\phi=-\eps^{3}\del_{T}(E_{0}A)\ee^{\im x}+\cc+ \Ord(\eps^{4}).
 \end{equation*}
 Moreover, it holds $(1+\del_{x}^{2})^{2}=1+2\del_{x}^{2}+\del_{x}^{4}$ and we have
 \begin{multline*}
  -2\del_{x}^{2}\phi=\Bigl[-2\eps^{3}\del_{X}^{2}(E_{0}A)\ee^{\im x}-4\im \eps^{2}\del_{X}(E_{0}A)\ee^{\im x} +2\eps(E_{0}A)\ee^{\im x}\\*
  +8\eps^{2}(E_{0}A_{2})\ee^{2\im x}-8\im \eps^{3}\del_{X}(E_{0}A_{2})\ee^{2\im x}\Bigr]+\cc+\Ord(\eps^{4}).
 \end{multline*}
Similarly, we obtain
\begin{multline*}
 -\del_{x}^{4}\phi=\Bigl[6\eps^{3}\del_{X}^{2}(E_{0}A)\ee^{\im x}+4\im \eps^{2}\del_{X}(E_{0}A)\ee^{\im x}
 -\eps(E_{0}A)\ee^{\im x}\\*
 -16\eps^{2}(E_{0}A_{2})\ee^{2\im x}+32\im \eps^{3}\del_{X}(E_{0}A_{2})\ee^{2\im x}\Bigr]+\cc+\Ord(\eps^{4}).
\end{multline*}
If we also note that
$\eps^{2}\phi=\eps^{3}(E_{0}A)\ee^{\im x}+\cc+\Ord(\eps^{4})$ we already get
\begin{multline}\label{eq:Res:lambda}
 \Lin(\phi)=\Bigl[-9\eps^{2}(E_{0}A_{2})\ee^{2\im x}+4\eps^{3}\del_{X}^{2}(E_{0}A)\ee^{\im x}+(E_{0}A)\ee^{\im x}+24\im \eps^{3}\del_{X}(E_{0}A_{2})\ee^{-2\im x}\Bigr]+\cc\\*
 -\eps^{2}(E_{0}A_{0})+\Ord(\eps^4).
\end{multline}
In order to compute the non-linear terms, we will use the general relation
\begin{multline*}
 V(\eps x) \bigl(N(\cdot)\ast W(\eps\cdot) \ee^{m\im \cdot}\bigr)(x)=V(\eps x)\ee^{n\im x}\int_{\R}N(y)W(\eps(x-y))\ee^{m\im (x-y)}\dy\\*
 =V(\eps x)\ee^{(n+m)\im x}\int_{\R}N(y)\ee^{-m\im y}W(\eps(x-y))\dy=V(\eps x)\ee^{(n+m)\im x} \bigl((N(\cdot)\ee^{-m\im \cdot})\ast W(\eps \cdot)\bigr)(x).
\end{multline*}
We note that these manipulations are rigorously justified in the expressions where we will use this below. In particular, we find
\begin{equation}\label{eq:Res:quadratic}
 \begin{split}
   \Nlin_{Q}(\phi)=&-\eps^{2}\Bigl[(E_{0}A)\ee^{2\im x}\bigl((Q\ee^{-\im \cdot})\ast(E_{0}A)(\eps\cdot)\bigr)+(E_{0}A)\bigl((Q\ee^{\im \cdot})\ast(E_{0}\bar{A})(\eps \cdot)\bigr)\Bigr]+\cc\\*
 &-\eps^{3}\Bigl[(E_{0}A)\ee^{3\im x}\bigl((Q\ee^{-2\im \cdot})\ast(E_{0}A_{2})(\eps\cdot)\bigr)+(E_{0}A_{2})\ee^{3\im x}\bigl((Q\ee^{-\im \cdot})\ast(E_{0}A)(\eps\cdot)\bigr)\\*
 &\qquad+(E_{0}\bar{A})\ee^{\im x}\bigl((Q\ee^{-2\im \cdot})\ast(E_{0}A_{2})(\eps \cdot)\bigr)+(E_{0}A_{2})\ee^{\im x}\bigl((Q\ee^{\im \cdot})\ast(E_{0}\bar{A})(\eps\cdot)\bigr)\\*
 &\qquad+(E_{0}A)\ee^{\im x}\bigl(Q\ast(E_{0}A_{0})(\eps\cdot)\bigr)+(E_{0}A_{0})\ee^{\im x}\bigl((Q\ee^{-\im \cdot})\ast(E_{0}A)(\eps\cdot)\bigr)\Bigr]+\cc+\Ord(\eps^{4}).
 \end{split}
\end{equation}
 For the cubic terms we obtain in the same way
 \begin{multline}\label{eq:Res:cubic}
  \Nlin_{K}(\phi)=-\eps^{3}\Bigl[(E_{0}A)\ee^{3\im x}\bigl((K\ee^{-2\im \cdot})\ast(E_{0}A)^{2}(\eps \cdot)\bigr)+(E_{0}\bar{A})\ee^{\im x}\bigl((K\ee^{-2\im \cdot})\ast(E_{0}A)^{2}(\eps \cdot)\bigr)\\*
  +2(E_{0}A)\ee^{\im x}\bigl(K\ast \bigl((E_{0}A)(E_{0}\bar{A})\bigr)(\eps\cdot)\bigr)\Bigr]+\cc+\Ord(\eps^{4}).
 \end{multline}
 Summarising~\cref{eq:Res:lambda,eq:Res:quadratic,eq:Res:cubic} we find that 
 \begin{equation*}
  \Res(\phi)=\sum_{\ell=-3}^{3}a_{\ell} \ee^{\im \ell x} +\Ord(\eps^{4})
 \end{equation*}
 with $a_{-\ell}=\bar{a}_{\ell}$ and
 \begin{equation}\label{eq:Res:prefactors}
   \begin{aligned}
   a_{0}&=-\eps^{2}\Bigl[(E_{0}A_{0})+(E_{0}A)\bigl((Q\ee^{\im \cdot})\ast (E_{0}\bar{A})(\eps\cdot)\bigr)+(E_{0}\bar{A})\bigl((Q\ee^{-\im \cdot})\ast(E_{0}A)(\eps \cdot)\bigr)\Bigr]\\
   a_{1}&=\eps^{3}\Bigl[-\del_{T}(E_{0}A)+4\del_{X}^{2}(E_{0}A)+(E_{0}A)-(E_{0}\bar{A})(Q\ee^{-2\im\cdot})\ast(E_{0}A_2)(\eps\cdot)\\
   &\quad\phantom{{}=\eps^{3}\Bigl[}-(E_{0}A)Q\ast (E_{0}A_{0})(\eps\cdot)-(E_{0}A_{0})(Q\ee^{-\im \cdot})\ast (E_{0}A)(\eps\cdot)\\
   &\qquad\phantom{{}=\eps^{3}\Bigl[}-2(E_{0}A)K\ast \bigl((E_{0}A)(E_{0}\bar{A})\bigr)(\eps\cdot)-(E_{0}\bar{A}) (K\ee^{-2\im \cdot})\ast (E_{0}A)^{2}(\eps\cdot)\Bigr]\\
   a_{2}&=-9\eps^{2}(E_{0}A_{2})+24\im \eps^{3}\del_{X}(E_{0}A_{2})-\eps^{2}(E_{0}A)(Q\ee^{-\im\cdot})\ast (E_{0}A)(\eps\cdot)\\
   a_{3}&=-\eps^{3}\Bigl[(E_{0}A)(Q\ee^{-2\im\cdot})\ast (E_{0}A_{2})(\eps\cdot)\\
   &\quad\phantom{{}=-\eps^{3}\Bigl[}+(E_{0}A_{2})(Q\ee^{-\im\cdot})\ast(E_{0}A)(\eps\cdot)+(E_{0}A)(K\ee^{-2\im\cdot})\ast (E_{0}A)^{2}(\eps\cdot)\Bigr].
   \end{aligned}
 \end{equation}
 
 \subsection{Estimating the residuum}
 
  In this section, we provide several estimates on $\Res(\phi)$ that we will need later on. More precisely, the next lemma states that the pre-factor for the uncritical modes is of order $\eps^{3}$ while that one for the critical modes can even be bounded by $\eps^{4}$.

 \begin{lemma}\label{Lem:Res:est:prefactors}
  There exists a constant $C>0$ such that it holds
  \begin{align}
   \sup_{t\in[0,T_{*}/\eps^{2}]}\bigl(\norm{a_{0}}_{C_{1}}+\norm{a_{2}}_{C^{1}}+\norm{a_{3}}_{C^{1}}\bigr)&\leq C\eps^{3} \label{eq:Res:est:prefactor:1}\\
   \sup_{t\in[0,T_{*}/\eps^{2}]}\norm{a_{1}}_{C^{1}}&\leq C\eps^{4} \label{eq:Res:est:prefactor:2}
  \end{align}
 where $a_{\ell}$ is given by~\eqref{eq:Res:prefactors} for $\ell\in\{0,1,2,3\}$
 \end{lemma}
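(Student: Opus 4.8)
The statement is a collection of $C^1$-bounds on the Fourier coefficients $a_0,a_1,a_2,a_3$ of $\Res(\phi)$, so the plan is to go term by term through the explicit formulas~\eqref{eq:Res:prefactors} and apply the two workhorse lemmas already at our disposal: Lemma~\ref{Lem:continuity:convolution} (bounded convolution operators on $C^k$) and, crucially, Lemma~\ref{Lem:approx} (replacing $B_1(\eps\cdot)(Q\ee^{n\im\cdot})\ast B_2(\eps\cdot)$ by $q_n(B_1B_2)(\eps\cdot)$ at the cost of one power of $\eps$). Throughout, $A\in C([0,T_*],C^4_\txtb)$ is fixed, so $\norm{A}_{C^4}$, and hence by~\eqref{eq:A2:A0} also $\norm{A_0}_{C^3}$ and $\norm{A_2}_{C^3}$, are bounded uniformly in $t$; by Lemma~\ref{Lem:smoothing} the same holds for $\norm{E_0A}_{C^4}$, $\norm{E_0A_0}_{C^3}$, $\norm{E_0A_2}_{C^3}$, and by Lemma~\ref{Lem:est:time:derivative} also for $\norm{\del_T A_0}_{C^1}$, $\norm{\del_T A_2}_{C^1}$. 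So every $C^1$-norm appearing in~\eqref{eq:Res:prefactors} of a product/convolution of such factors is bounded by an absolute constant; the only question is the explicit power of $\eps$ in front.

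For~\eqref{eq:Res:est:prefactor:1}, the bounds on $a_2$ and $a_3$ are essentially immediate: $a_3$ carries an explicit prefactor $\eps^3$ and all the terms inside its bracket have bounded $C^1$-norm by Lemma~\ref{Lem:continuity:convolution}, so $\norm{a_3}_{C^1}\le C\eps^3$. For $a_2$, the first two terms have explicit prefactors $\eps^2$ and $\eps^3$; the subtlety is the combination $-9\eps^2(E_0A_2)-\eps^2(E_0A)(Q\ee^{-\im\cdot})\ast(E_0A)(\eps\cdot)$, which is formally $\Ord(\eps^2)$. Here I use Lemma~\ref{Lem:approx} with $n=-1$ to write $(E_0A)(\eps x)\,((Q\ee^{-\im\cdot})\ast(E_0A)(\eps\cdot))(x)=q_{-1}((E_0A)^2)(\eps x)+\Ord(\eps)=q_1((E_0A)^2)(\eps x)+\Ord(\eps)$ in $C^1$, and then the defining choice $A_2=-\tfrac{q_1}{9}A^2$ in~\eqref{eq:A2:A0} makes the $\Ord(\eps^2)$ parts cancel — up to an $E_0$-commutator error $(E_0(A^2)-(E_0A)^2)(\eps\cdot)$, which is $\Ord(\eps)$ by Lemma~\ref{Lem:E0:eps} (or rather Lemma~\ref{Lem:smoothing} applied to $A^2-$ its low-pass). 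Hence $\norm{a_2}_{C^1}\le C\eps^3$. The term $a_0$ is the analogous story: it has explicit prefactor $\eps^2$, and inside the bracket I apply Lemma~\ref{Lem:approx} (with $n=1$ and $n=-1$) to get $(E_0A)(Q\ee^{\im\cdot})\ast(E_0\bar A)(\eps\cdot)+(E_0\bar A)(Q\ee^{-\im\cdot})\ast(E_0A)(\eps\cdot)=2q_1(E_0A)(E_0\bar A)(\eps\cdot)+\Ord(\eps)=2q_1\abs{A}^2(\eps\cdot)+\Ord(\eps)$, and then $A_0=-2q_1\abs{A}^2$ cancels the leading part, leaving $\norm{a_0}_{C^1}\le C\eps^3$.

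For~\eqref{eq:Res:est:prefactor:2}, $a_1$ already carries an explicit $\eps^3$, so I must show that the bracket in $a_1$ has $C^1$-norm $\Ord(\eps)$, i.e.\ that it vanishes to leading order. This is precisely the amplitude (Ginzburg–Landau) equation doing its job. I apply Lemma~\ref{Lem:approx} to each convolution term in the bracket: $(E_0\bar A)(Q\ee^{-2\im\cdot})\ast(E_0A_2)(\eps\cdot)=q_2(\bar A A_2)(\eps\cdot)+\Ord(\eps)$; $(E_0A)Q\ast(E_0A_0)(\eps\cdot)=q_0(AA_0)(\eps\cdot)+\Ord(\eps)$; $(E_0A_0)(Q\ee^{-\im\cdot})\ast(E_0A)(\eps\cdot)=q_1(A_0A)(\eps\cdot)+\Ord(\eps)$; $2(E_0A)K\ast((E_0A)(E_0\bar A))(\eps\cdot)=2k_0(A\abs{A}^2)(\eps\cdot)+\Ord(\eps)$; $(E_0\bar A)(K\ee^{-2\im\cdot})\ast(E_0A)^2(\eps\cdot)=k_2(\bar A A^2)(\eps\cdot)+\Ord(\eps)=k_2(A\abs{A}^2)(\eps\cdot)+\Ord(\eps)$; and $-\del_T(E_0A)+4\del_X^2(E_0A)+(E_0A)=E_0(-\del_TA+(1+4\del_X^2)A)$. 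Substituting $A_0=-2q_1\abs{A}^2$ and $A_2=-\tfrac{q_1}{9}A^2$ from~\eqref{eq:A2:A0}, the cubic-in-$A$ coefficients collect to exactly $-(2k_0+k_2-\tfrac{q_1q_2}{9}-\tfrac{q_1^2}{9}-2q_0q_1-2q_1^2)\abs{A}^2A$, so the leading part of the bracket equals $E_0\big(\del_TA-(1+4\del_X^2)A+(\cdots)\abs{A}^2A\big)(\eps\cdot)=0$ by~\eqref{eq:GL}, and what remains is $\Ord(\eps)$. Hence $\norm{a_1}_{C^1}\le C\eps^4$.

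**Main obstacle.** The genuinely delicate bookkeeping — and the place the proof can go wrong — is the final coefficient-matching in the $a_1$-computation: one must track the arguments $\ee^{\pm n\im\cdot}$ through the identity $V(\eps x)(N\ast W(\eps\cdot)\ee^{m\im\cdot})(x)=V(\eps x)\ee^{(n+m)\im x}((N\ee^{-m\im\cdot})\ast W(\eps\cdot))(x)$ used to produce~\eqref{eq:Res:quadratic}–\eqref{eq:Res:cubic}, substitute $A_0,A_2$, use $q_{-n}=q_n$, $k_{-n}=k_n$, and check that the resulting cubic coefficient is \emph{exactly} the one in~\eqref{eq:GL} with no stray factor. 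A secondary, more routine point is handling the various $E_0$-commutators (e.g.\ $E_0(AB)$ versus $(E_0A)(E_0B)$, and $(E_0A)(\eps\cdot)$ versus $A(\eps\cdot)$): each is $\Ord(\eps)$ in $C^1$ by Lemmas~\ref{Lem:smoothing} and~\ref{Lem:E0:eps}, and must be absorbed into the $\Ord(\eps)$ error at each application of Lemma~\ref{Lem:approx}, but none of these causes a real difficulty.
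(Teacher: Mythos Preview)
Your proposal is correct and follows essentially the same route as the paper. The paper packages your ``$E_0$-commutator error'' step as an explicit decomposition (its Remark~\ref{Rem:relations}): it first writes each $E_0 B = B + E_0^c B$ to peel the mode filter off \emph{before} invoking Lemma~\ref{Lem:approx} on the clean slow-variable terms $B_1(\eps\cdot)(Q\ee^{n\im\cdot})\ast B_2(\eps\cdot)$, whereas you apply Lemma~\ref{Lem:approx} ``to $E_0A$'' and absorb the discrepancy afterwards via Lemmas~\ref{Lem:smoothing} and~\ref{Lem:E0:eps}. Since Lemma~\ref{Lem:approx} is literally stated for inputs of the form $B_i(\eps\cdot)$, the paper's ordering is the more precise one and you should adopt it when writing the argument out---but the two are the same bookkeeping, and your identification of the cancellation mechanisms (the choices~\eqref{eq:A2:A0} for $a_0,a_2$ and the Ginzburg--Landau equation~\eqref{eq:GL} for $a_1$) matches the paper exactly.
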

 
 The following relations will be used in the proof of the lemma.
 
 \begin{remark}\label{Rem:relations}
For each $n\in\Z$ and functions $B,B_1,B_2,B_3$ we have the relations $(E_{0}B)=(E_{0}^{c}B)+B$ as well as
\begin{equation*}
 (E_{0}B_{1})(Q\ee^{n\im\cdot})\ast(E_{0}B_{2})=(E_{0}^{c}B_{1})(Q\ee^{n\im\cdot})\ast (E_{0}B_{2})+B_{1}(Q\ee^{n\im\cdot})\ast(E_{0}^{c}B_{2})+B_{1}(Q\ee^{n\im\cdot})\ast B_{2}
\end{equation*}
and
\begin{multline*}
 (E_{0}B_{1})(K\ee^{n\im\cdot})\ast\bigl((E_{0}B_{2})(E_{0}B_{3})\bigr)=(E_{0}^{c}B_{1})(K\ee^{n\im\cdot})\ast\bigl((E_{0}B_{2})(E_{0}B_{3})\bigr)\\*
 +B_{1}(K\ee^{n\im\cdot})\ast \bigl((E_{0}^{c}B_{2})(E_{0}B_{3})\bigr)+B_{1}(K\ee^{n\im\cdot})\ast\bigl(B_{2}(E_{0}^{c}B_{3})\bigr)+B_{1}(K\ee^{n\im\cdot})\ast(B_{2}B_{3}).
\end{multline*}
\end{remark}
 
 \begin{proof}[Proof of Lemma~\ref{Lem:Res:est:prefactors}]
  We consider first $a_{0}$. Since $A_{0}=-2q_{1}A\bar{A}$ we obtain by means of Remark~\ref{Rem:relations} that
  \begin{multline*}
   -\eps^{2}\Bigl[(E_{0}A_{0})+(E_{0}A)\bigl((Q\ee^{\im \cdot})\ast (E\bar{A})(\eps\cdot)\bigr)+(E_{0}\bar{A})\bigl((Q\ee^{-\im \cdot})\ast(E_{0}A)(\eps \cdot)\bigr)\Bigr]\\*
   =-\eps^{2}\Bigl[(E_{0}^{c}A_{0})-2q_{1}A\bar{A}+(E_{0}^{c}A)\bigl((Q\ee^{\im \cdot})\ast (E_{0}\bar{A})(\eps\cdot)\bigr)+A\bigl((Q\ee^{\im \cdot})\ast (E_{0}^{c}\bar{A})(\eps\cdot)\bigr)+A(Q\ee^{\im \cdot})\ast\bar{A}(\eps\cdot)\\*
   +(E_{0}^{c}\bar{A})\bigl((Q\ee^{-\im \cdot})\ast(E_{0}A)(\eps \cdot)\bigr)+\bar{A}\bigl((Q\ee^{-\im \cdot})\ast(E_{0}^{c}A)(\eps \cdot)\bigr)+\bar{A}\bigl((Q\ee^{-\im \cdot})\ast A(\eps \cdot)\Bigr].
  \end{multline*}
  Due to \cref{Lem:approx,Lem:continuity:convolution} and $A_{0}=-2q_{1}A\bar{A}$ we thus obtain
  \begin{multline*}
   \phantom{{}\leq{}}\norm{(E_{0}A_{0})+(E_{0}A)\bigl((Q\ee^{\im \cdot})\ast (E\bar{A})(\eps\cdot)\bigr)+(E_{0}\bar{A})\bigl((Q\ee^{-\im \cdot})\ast(E_{0}A)(\eps \cdot)\bigr)}_{C^{1}}\\*
   \shoveleft{\leq C\eps \norm{A}_{C^{1}}\norm{\bar{A}}_{C^{1}}}\\*
   +C\Bigl[\norm{E_{0}^{c}A}_{C^{1}}\norm{E_{0}\bar{A}}_{C^{1}}+\norm{A}_{C^{1}}\norm{E_{0}^{c}\bar{A}}_{C^{1}}+\norm{E_{0}^{c}\bar{A}}_{C^{1}}\norm{E_{0}A}_{C^{1}}+\norm{\bar{A}}_{C^{1}}\norm{E_{0}^{c}A}_{C^{1}}\Bigr].
  \end{multline*}
 \Cref{Lem:E0:eps,Lem:smoothing} together with the uniform boundedness of $A$ thus yield
 \begin{equation*}
  \sup_{t\in[0,T_{*}/\eps^{2}]}\norm{a_{0}}_{C^{1}}\leq C\eps^{3}.
 \end{equation*}
 To estimate $a_{2}$ we can proceed in the same way, i.e.\@ Lemma~\ref{Lem:smoothing} together with the boundedness of $A$ yields $\norm{24\im \eps^{3}\del_{X}(E_{0}A_{2})}_{C^{1}}\leq C\eps^{3}$. Thus, it remains to estimate $-9(E_{0}A_{2})-(E_{0}A)(Q\ee^{-\im\cdot})\ast (E_{0}A)(\eps\cdot)$ which can be rewritten by means of \cref{Rem:relations,eq:A2:A0} as
 \begin{multline*}
  -9(E_{0}A_{2})-(E_{0}A)(Q\ee^{-\im\cdot})\ast (E_{0}A)(\eps\cdot)\\*
  =-9(E_{0}^{c}A_{2})+q_{1}A^{2}-(E_{0}^{c}A)(Q\ee^{-\im \cdot})\ast(E_{0}A)(\eps\cdot)-A(Q\ee^{-\im\cdot})\ast (E_{0}^{c}A)-A(Q\ee^{-\im\cdot})\ast A(\eps\cdot).
 \end{multline*}
 \Cref{Lem:approx,Lem:continuity:convolution,Lem:E0:eps,Lem:smoothing} as well as $A_{2}=-(q_1 A^2)/9$ and the uniform boundedness of $A$ then imply that
 \begin{equation*}
  \norm{-9(E_{0}A_{2})-(E_{0}A)(Q\ee^{-\im\cdot})\ast (E_{0}A)(\eps\cdot)}_{C^{1}}\leq C\eps
 \end{equation*}
 uniformly with respect to $t\in[0,T_{*}/\eps^{2}]$. 
 
Moreover, due to the choice of $A_{2}$ together with \cref{Lem:continuity:convolution,Lem:smoothing} one immediately gets $\norm{a_{3}}_{C^{1}}\leq C\eps^{3}$ for all $t\in[0,T_{*}/\eps^{2}]$. Summarising, this shows~\eqref{eq:Res:est:prefactor:1}.

Thus, it only remains to prove~\eqref{eq:Res:est:prefactor:2} and for this we proceed similarly as before. More precisely, we first note that Remark~\ref{Rem:relations} allows to rewrite
\begin{equation*}
 -\del_{T}(E_{0}A)+4\del_{X}^{2}(E_{0}A)+(E_{0}A)=-\del_{T}(E_{0}^{c}A)+4\del_{X}^{2}(E_{0}^{c}A)+(E_{0}^{c}A)-\del_{T}A+4\del_{X}^{2}A+A.
\end{equation*}
Since $A$ solves~\eqref{eq:GL} we further get
\begin{multline*}
 -\del_{T}(E_{0}A)+4\del_{X}^{2}(E_{0}A)+(E_{0}A)=-\del_{T}(E_{0}^{c}A)+4\del_{X}^{2}(E_{0}^{c}A)+(E_{0}^{c}A)\\*
 +\Bigl(2k_{0}+k_{2}-\frac{q_{1}q_{2}}{9}-\frac{q_{1}^{2}}{9}-2q_{0}q_{1}-2q_{1}^{2}\Bigr)\abs*{A}^{2}A.
\end{multline*}
Therefore, it remains to estimate the $C^{1}$-norm of
\begin{multline*}
 -\del_{T}(E_{0}^{c}A)+4\del_{X}^{2}(E_{0}^{c}A)+(E_{0}^{c}A)+\Bigl(2k_{0}+k_{2}-\frac{q_{1}q_{2}}{9}-\frac{q_{1}^{2}}{9}-2q_{0}q_{1}-2q_{1}^{2}\Bigr)\abs*{A}^{2}A\\*
 +(E_{0}A)-(E_{0}\bar{A})(Q\ee^{-2\im\cdot})\ast(E_{0}A_2)(\eps\cdot)-(E_{0}A)Q\ast (E_{0}A_{0})(\eps\cdot)
 -(E_{0}A_{0})(Q\ee^{-\im \cdot})\ast (E_{0}A)(\eps\cdot)\\*
 -2(E_{0}A)K\ast \bigl((E_{0}A)(E_{0}\bar{A})\bigr)(\eps\cdot)-(E_{0}\bar{A}) (K\ee^{-2\im \cdot})\ast (E_{0}A)^{2}(\eps\cdot).
\end{multline*}
However, since $\abs{A}^{2}A=A^{2}\bar{A}$ this can be done in the same way as for $a_{0}$ and $a_{2}$.
 \end{proof}
 
 As a consequence of Lemma~\ref{Lem:Res:est:prefactors}, we can now prove the following result which provides bounds on the restrictions of $\Res(\phi)$ to critical and uncritical Fourier modes.
 
 \begin{proposition}\label{Prop:residuum:estimates}
  For each solution $A\in C([0,T_{*}],C^{4}_{\text{b}})$ of~\eqref{eq:GL} and $\phi$ as in~\eqref{eq:approx:refined} there exists a constant such that it holds
  \begin{equation*}
   \sup_{t\in[0,T_{*}/\eps^{2}]}\norm{E_{s}(\Res(\phi))}_{C^{1}}\leq C\eps^{3}\quad \text{and}\quad \sup_{t\in[0,T_{*}/\eps^{2}]}\norm{E_{c}(\Res(\phi))}_{C^{1}}\leq C\eps^{4}.
  \end{equation*}
 \end{proposition}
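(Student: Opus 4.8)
The plan is to read off both bounds directly from the expansion $\Res(\phi)=\sum_{\ell=-3}^{3}a_{\ell}\ee^{\im\ell x}+\Ord(\eps^{4})$ derived above, together with \cref{Lem:Res:est:prefactors}. First I would record that the $\Ord(\eps^{4})$ remainder is a genuine function bounded in $C^{1}$ by $C\eps^{4}$ uniformly for $t\in[0,T_{*}/\eps^{2}]$: every term dropped in the computation of $\Res(\phi)$ is a product of spatial and temporal derivatives of $E_{0}A$, $E_{0}A_{0}$, $E_{0}A_{2}$ with at most one convolution against $Q\ee^{\im n\cdot}$ or $K\ee^{\im n\cdot}$, so its $C^{1}$-norm is controlled by \cref{Lem:smoothing,Lem:est:time:derivative,Lem:continuity:convolution} and the uniform boundedness of $A$ on $[0,T_{*}/\eps^{2}]$, exactly as indicated before the computation of the residuum.

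For the \emph{uncritical} modes this is then immediate: by \cref{Lem:Res:est:prefactors} we have $\norm{a_{\ell}}_{C^{1}}\leq C\eps^{3}$ for every $\ell$, and since multiplication by $\ee^{\im\ell x}$ is bounded on $C^{1}$ (with a constant depending only on the fixed $\ell$), we get $\norm{\Res(\phi)}_{C^{1}}\leq C\eps^{3}$; as $E_{s}=\Id-E_{c}$ is bounded on $C^{1}$ by \cref{Lem:smoothing}, the first estimate follows. For the \emph{critical} modes, the key observation to establish is that $E_{c}$ annihilates $a_{\ell}\ee^{\im\ell x}$ whenever $\ell\neq\pm1$. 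Each $a_{\ell}$ in~\eqref{eq:Res:prefactors} is, up to the prefactors and spatial derivatives, a finite sum of terms of the form $B_{1}(Q\ee^{\im n\cdot})\ast B_{2}$ or $B_{1}(K\ee^{\im n\cdot})\ast(B_{2}B_{3})$, together with $E_{0}A_{0}$ and $E_{0}A_{2}$ themselves, where each $B_{j}$ is one of $E_{0}A$, $E_{0}\bar{A}$, $E_{0}A_{0}$, $E_{0}A_{2}$ --- whose Fourier transforms are supported in $\overline{\ball}_{1/4}(0)$ because $\F(E_{0}v)=\chi_{0}\F(v)$. Hence \cref{Lem:support:Fourier,Rem:product:distributions} yield $\supp\F(a_{0}),\supp\F(a_{2})\subset\overline{\ball}_{1/2}(0)$ and $\supp\F(a_{3})\subset\overline{\ball}_{3/4}(0)$; by the modulation identity $\supp\F(a_{\ell}\ee^{\im\ell x})=\ell+\supp\F(a_{\ell})$, so for $\ell\in\{0,\pm2,\pm3\}$ this support lies at positive distance from $\overline{\ball}_{1/4}(-1)\cup\overline{\ball}_{1/4}(1)\supset\supp\chi_{c}$, whence $\chi_{c}\,\F(a_{\ell}\ee^{\im\ell x})=0$ and thus $E_{c}(a_{\ell}\ee^{\im\ell x})=0$. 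Consequently $E_{c}(\Res(\phi))=E_{c}\bigl(a_{1}\ee^{\im x}+a_{-1}\ee^{-\im x}+\Ord(\eps^{4})\bigr)$, and since $\norm{a_{\pm1}}_{C^{1}}\leq C\eps^{4}$ by \cref{Lem:Res:est:prefactors} and $E_{c}$ is bounded on $C^{1}$ by \cref{Lem:smoothing}, the second estimate follows.

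I expect the main obstacle to be the Fourier-support bookkeeping in the critical estimate: one must keep $\supp\F(a_{0})$ and $\supp\F(a_{2})$ inside $\overline{\ball}_{1/2}(0)$, rather than the cruder $\overline{\ball}_{3/4}(0)$ that a sloppier count would give, so that their translates by $\ell=\pm2$ stay strictly separated from the support of $\chi_{c}$; for $\ell=\pm3$ the crude bound already suffices and $\ell=0$ is immediate. A secondary point needing some care is the uniform-in-$t$ control of the $\Ord(\eps^{4})$ remainder over the long interval $[0,T_{*}/\eps^{2}]$, but this is already subsumed in the remarks preceding the residuum computation. Everything else is a routine combination of the estimates already in hand.
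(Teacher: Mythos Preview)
Your proposal is correct and follows essentially the same route as the paper's proof: decompose $\Res(\phi)=\sum_{\ell}a_{\ell}\ee^{\im\ell x}+\Ord(\eps^{4})$, invoke \cref{Lem:Res:est:prefactors} and the boundedness of $E_{s}$ for the first estimate, and for the second estimate argue that $E_{c}$ kills $a_{\ell}\ee^{\im\ell x}$ for $\ell\neq\pm1$ via Fourier-support considerations. The only noteworthy difference is that the paper is content with the uniform crude bound $\supp\F(a_{\ell})\subset[-3/4,3/4]$ and simply asserts $\chi_{c}\equiv0$ on $\bar{\ball}_{3/4}(\ell)$ for $\ell\in\{0,\pm2,\pm3\}$; your sharper tracking of $\supp\F(a_{0}),\supp\F(a_{2})\subset\bar{\ball}_{1/2}(0)$ is not strictly needed but does keep the translated supports at strictly positive distance from $\supp\chi_{c}$, which sidesteps any boundary-point worry without further comment.
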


 \begin{proof}
  The proof follows easily from Lemma~\ref{Lem:Res:est:prefactors}. Precisely, we note that $\Res(\phi)=\sum_{\ell=-3}^{3}a_{\ell}\ee^{\im \ell x}$ with $a_{\ell}$ as in~\eqref{eq:Res:prefactors} and $a_{-\ell}=\bar{a}_{\ell}$. Moreover, $E_{s}=1-E_{c}$ and thus, due to Lemma~\ref{Lem:smoothing} we deduce that $E_{s}\colon C^{1}\to C^{1}$ is linear and bounded. Therefore, in order to verify the first claimed estimate, it suffices to show that
  \begin{equation*}
   \sup_{t\in[0,T_{*}/\eps^{2}]}\norm{\Res(\phi)}_{C^{1}}\leq C\eps^{3}
  \end{equation*}
 which is however an immediate consequence of Lemma~\ref{Lem:Res:est:prefactors}.
 
 To prove the second claim of the lemma, we note that the definition of $E_{0}$ together with Lemma~\ref{Lem:support:Fourier} yields that $a_{\ell}$ is supported in $[-3/4,3/4]$. Thus, we find that the Fourier transform of $a_{\ell}\ee^{\im\ell \cdot}$ is supported in $\overline{B}_{3/4}(\ell)$. Since $\chi_{c}\equiv 0$ on $\bar{B}_{3/4}(\ell)$ for $\ell\in\{0,\pm 2,\pm 3\}$ we get that $E_{c}(\Res(\phi))=E_{c}(a_{1}\ee^{\im x}+\bar{a}_{1}\ee^{-\im x})+\Ord(\eps^{4})$. However, Lemma~\ref{Lem:smoothing} implies that $E_{c}\colon C^{1}\to C^{1}$ is bounded and thus the second claim of the lemma follows immediately from Lemma~\ref{Lem:Res:est:prefactors}.
 \end{proof}

\section{An equation for the approximation error}\label{Sec:equation:error}
 
 In this section, we will derive the equation which the approximation error $R$ has to satisfy and we will mainly use the same notation as in~\cite{Schneider4}. As already mentioned before, it will be necessary to treat the critical Fourier modes $\ee^{\pm \im x}$ separately from the uncritical ones and we therefore write
 \begin{equation*}
  u=\eps\phi_{c}+\eps^{2}\phi_{s}+\eps^{2}R_{c}+\eps^{3}R_{s}
 \end{equation*}
 where $\phi_{c}$ and $\phi_{s}$ have been defined in~\eqref{eq:phi:c:phi:s}. Moreover, to shorten the notation we also use $R\vcc=\eps^2 R_{c}+\eps^{3}R_{s}$ such that it holds $u=\phi+R$. If we plug this into~\eqref{eq:SW} it follows
 \begin{equation*}
  \begin{split}
   0&=\del_{t}u-\Lin(u)-\Nlin(u)\\
   &=\del_{t}R+\del_{t}\phi-\Lin(R)-\Lin(\phi)-\Nlin_{Q}(R+\phi)-\Nlin_{K}(R+\phi)\\
   &=\del_{t}R+\del_{t}\phi-\Lin(R)-\Lin(\phi)-\Nlin_{Q}(R)-\Nlin_{Q}(\phi)+R(Q\ast\phi)+\phi(Q\ast R)\\
   &\phantom{{}={}} -\Nlin_{K}(R)-\Nlin_{K}(\phi)+2R K\ast (R\phi)+RK\ast \phi^2+2\phi K\ast (R\phi)+\phi K\ast R^2.
  \end{split}
 \end{equation*}
If we now insert $R=\eps^2 R_{c}+\eps^{3}R_{s}$ and recall that $\Res(\phi)=-\del_{t}\phi+\Lin(\phi)+\Nlin(\phi)$ this can be further rearranged as
\begin{multline*}
 \eps^{2}\del_{t}R_{c}+\eps^{3}\del_{t}R_{s}=\eps^2\Lin(R_{c})+\eps^{3}\Lin(R_{s})+\Res(\phi) -\eps^{4}R_{c}Q\ast R_{c}-\eps^{5}R_{s}Q\ast (R_{c}+\eps R_{s})\\*
 -\eps^{6}(R_{c}+\eps R_{s})\bigl(K\ast (R_{c}+\eps R_{s})^{2}\bigr)-\eps^{3}R_{c}Q\ast \phi_{c}-\eps^{4}R_{c} Q\ast \phi_{s}\\*
 -\eps^{4}R_{s}Q\ast \phi_{c}-\eps^{5}R_{s}Q\ast \phi_{s}-\eps^{3}\phi_{c}Q\ast R_{c}-\eps^{4}\phi_{s}Q\ast R_{c}\\*
 -\eps^{4}\phi_{c}Q\ast R_{s}-\eps^{5}\phi_{s}Q\ast R_{s}-2\eps^{5}(R_{c}+\eps R_{s})\bigl(K\ast \bigl((R_{c}+\eps R_{s})(\phi_{c}+\eps \phi_{s})\bigr)\bigr)\\*
 -\eps^{4}R_{c}(K\ast \phi_{c}^{2})-\eps^{5}R_{c}\bigl(K\ast (2\phi_{c}\phi_{s}+\eps\phi_{s}^{2})\bigr)-\eps^{5}R_{s}\bigl(K\ast (\phi_{c}+\eps\phi_{s})^{2}\bigr)-2\eps^{4}\phi_{c}(K\ast (R_{c}\phi_{c})\bigr)\\*
 -2\eps^{5}\phi_{c}\bigl(K\ast(R_{c}\phi_{s}+R_{s}\phi_{c}+\eps R_{s}\phi_{s})\bigr)-2\eps^{5}\phi_{s}\bigl(K\ast \bigl((R_{c}+\eps R_{s})(\phi_{c}+\eps\phi_{s})\bigr)\bigr)\\*
 -2\eps^{5}(\phi_{c}+\eps\phi_{s})\bigl(K\ast (R_{c}+\eps R_{s})^{2}\bigr).
\end{multline*}
If we divide by $\eps^{2}$ and reorganise, we finally end up with
\begin{multline}\label{eq:approximation:error}
 \del_{t}R_{c}+\eps \del_{t}R_{s}=\Lin(R_{c})+\eps \Lin(R_{s})-\eps L_{2}(R_{c})-\eps ^{2}N_{2}(R_{c})\\*
 -\eps^{2}L_{1}(R_{c},R_{s})+\eps^{3}N_{1}(R_{c},R_{s},\eps)+\frac{1}{\eps^{2}}\Res(\phi),
\end{multline}
where we write
\begin{equation*}
 \begin{split}
  L_{2}(R_{c})&=R_{c}Q\ast \phi_{c}+\phi_{c} Q\ast R_{c}\\
  N_{2}(R_{c})&=R_{c}Q\ast R_{c}\\
  L_{1}(R_{c},R_{s})&=R_{c}Q\ast \phi_{s}+R_{s}Q\ast \phi_{c}+\phi_{s}Q\ast R_{c}+\phi_{c}Q\ast R_{s}+R_{c}K\ast \phi_{c}^{2}+2\phi_{c}\bigl(K\ast(R_{c}\phi_{c})\bigr)
 \end{split}
\end{equation*}
and
\begin{multline*}
 N_{1}(R_{c},R_{s},\eps)=-R_{s}Q\ast (R_{c}+\eps R_{s})-\eps (R_{c}+\eps R_{s})\bigl(K\ast (R_{c}+\eps R_{s})^2\bigr)-R_{s}Q\ast \phi_{s}\\*
  -\phi_{s}Q\ast R_{s}-2(R_{c}+\eps R_{s})\bigl(K\ast \bigl(R_{c}+\eps R_{s})(\phi_{c}+\eps \phi_{s})\bigr)\bigr)-R_{c}\bigl(K\ast (2\phi_{c}\phi_{s}+\eps \phi_{s}^{2})\bigr)\\*
  -R_{s}\bigl(K\ast (\phi_{c}+\eps \phi_{s})^{2}\bigr)-2\phi_{c}\bigl(K\ast(R_{c}\phi_{s}+R_{s}\phi_{c}+\eps R_{s}\phi_{s})\bigr)\\*
  -2\phi_{s}\bigl(K\ast \bigl((R_{c}+\eps R_{s})(\phi_{c}+\eps\phi_{s})\bigr)\bigr)-2(\phi_{c}+\eps\phi_{s})\bigl(K\ast (R_{c}+\eps R_{s})^{2}\bigr).
\end{multline*}

As in~\cite{Schneider4} we now exploit that Lemma~\ref{Lem:cancel:critical} implies $E_{c}L_{2}(R_{c})=0$ and $E_{c}N_{2}(R_{c})=0$ to separate the equation for $R$. Precisely, we apply the identity operator $\text{Id}=E_{c}+E_{s}$ to~\eqref{eq:approximation:error} such that we obtain
\begin{equation}\label{eq:Rc:Rs}
 \begin{split}
 \del_{t}R_{c}&=\Lin(R_{c})-\eps^{2}L_{c}(R_{c},R_{s})+\eps^{3}N_{c}(R_{c},R_{s})+\eps^{2}\delta_{c}\\
 \del_{t}R_{s}&=\Lin(R_{s})-L_{s}(R_{c})+\eps N_{s}(R_{c},R_{s})+\delta_{s},
 \end{split}
 \end{equation}
with the abbreviations
\begin{equation}\label{eq:operators}
 \begin{aligned}
  L_{c}(R)&=E_{c}\bigr(L_{1}(R_{c},R_{s})\bigr) & L_{s}(R_{c})&=E_{s}\bigl(L_{2}(R_{c})\bigr)\\
  N_{c}(R)&=E_{c}\bigl(N_{1}(R_{c},R_{s})\bigr) & N_{s}(R_{c},R_{s})&=E_{s}\bigl(L_{1}(R_{c},R_{s})+N_{2}(R_{c})+\eps N_{1}(R_{c},R_{s})\bigr)\\
  \delta_{c}&=\eps^{-4}E_{c}\bigl(\Res(\phi)\bigr) & \delta_{s}&=\eps^{-3}E_{s}\bigl(\Res(\phi)\bigr). 
 \end{aligned}
\end{equation}

\begin{remark}
 Note that if $R_{c}$ and $R_{s}$ solve~\eqref{eq:Rc:Rs} the sum $R_{c}+\eps R_{s}$ gives a solution to~\eqref{eq:approximation:error}.
\end{remark}

\begin{remark}
 The existence of a unique solution to~\eqref{eq:Rc:Rs} locally in time can be shown by a standard fixed-point argument similarly as in~\cite{Schneider4}. Note that for this it is important that the non-linear terms are locally Lipschitz continuous which might be easily deduced from Lemma~\ref{Lem:continuity:convolution}.
\end{remark}

Moreover, we have the following estimates on the linear operators $L_{c}$ and $L_{s}$.

\begin{lemma}\label{Lem:Lc:Ls}
 There exists a constant $C>0$ such that it holds
 \begin{equation*}
  \norm{L_{c}(R_{c},R_{s})}_{C^{1}}\leq C\bigl(\norm{R_{c}}_{C^{1}}+\norm{R_{s}}_{C^{1}}\bigr)\quad \text{and}\quad \norm{L_{s}(R_{c})}_{C^{1}}\leq C\norm{R_{c}}_{C^{1}}
 \end{equation*}
 for the operators $L_{c}$ and $L_{s}$ as given in~\eqref{eq:operators}.
\end{lemma}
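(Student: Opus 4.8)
The plan is to reduce both estimates to the convolution–continuity bound of \cref{Lem:continuity:convolution}, using that the mode filters are $C^{1}$-bounded and that $\phi_{c},\phi_{s}$ are uniformly bounded. First, by \cref{Lem:smoothing} the operators $E_{c}$ and $E_{s}=\Id-E_{c}$ are bounded linear maps $C^{1}\to C^{1}$ with $\eps$-independent norm, so in view of \eqref{eq:operators} it suffices to bound $\norm{L_{2}(R_{c})}_{C^{1}}$ by $C\norm{R_{c}}_{C^{1}}$ and $\norm{L_{1}(R_{c},R_{s})}_{C^{1}}$ by $C(\norm{R_{c}}_{C^{1}}+\norm{R_{s}}_{C^{1}})$, i.e.\@ we may drop the outer filters $E_{c}$, $E_{s}$.

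Next I would put every summand of $L_{1}$ and $L_{2}$ into the form handled by \cref{Lem:continuity:convolution}. Recall that $\phi_{c}=(E_{0}A)(\eps\cdot)\ee^{\im x}+(E_{0}\bar{A})(\eps\cdot)\ee^{-\im x}$ and, by \eqref{eq:phi:c:phi:s} and~\eqref{eq:A2:A0}, that $\phi_{s}$ is a sum of $(E_{0}A_{2})(\eps\cdot)\ee^{\pm2\im x}$ and $(E_{0}A_{0})(\eps\cdot)$ with $A_{0}=-2q_{1}A\bar{A}$, $A_{2}=-q_{1}A^{2}/9$. Using the elementary identity $\bigl(N\ast(W(\eps\cdot)\ee^{\im m\cdot})\bigr)(x)=\ee^{\im mx}\bigl((N\ee^{-\im m\cdot})\ast W(\eps\cdot)\bigr)(x)$ already used in \cref{Sec:residuum}, I would pull the fast phases out of each convolution. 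Each of the finitely many terms then becomes a sum of expressions $c\,\ee^{\im\ell x}B_{1}(\eps x)\bigl((Q\ee^{\im n\cdot})\ast B_{2}\bigr)(x)$ (the $Q$-terms) or $c\,\ee^{\im\ell x}B_{1}(\eps x)\bigl((K\ee^{\im n\cdot})\ast(B_{2}B_{3})\bigr)(x)$ (the $K$-terms) with $\abs{\ell},\abs{n}\leq3$, where exactly one of the $B_{j}$ is $R_{c}$ or $R_{s}$ and the remaining factors lie in $\{(E_{0}A)(\eps\cdot),(E_{0}\bar{A})(\eps\cdot),(E_{0}A_{0})(\eps\cdot),(E_{0}A_{2})(\eps\cdot)\}$.

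Then I would apply \eqref{eq:continuity:1} and~\eqref{eq:continuity:2} with $k=1$ to each term. Since $\norm{\ee^{\im\ell\cdot}f}_{C^{1}}\leq(1+\abs{\ell})\norm{f}_{C^{1}}$, since $\norm{(E_{0}A)(\eps\cdot)}_{C^{1}}\leq\norm{E_{0}A}_{C^{1}}\leq C\norm{A}_{C^{0}}$ by \cref{Lem:smoothing} (and likewise for $A_{0},A_{2}$, which are controlled by $\norm{A}_{C^{0}}$), and since \cref{Lem:phi:estimates} gives $\sup_{t\in[0,T_{*}/\eps^{2}]}(\norm{\phi_{c}}_{C^{1}}+\norm{\phi_{s}}_{C^{1}})\leq C$ with $C$ independent of $\eps$, each term originating from $L_{2}$ is bounded by $C\norm{R_{c}}_{C^{1}}$ and each term from $L_{1}$ by $C(\norm{R_{c}}_{C^{1}}+\norm{R_{s}}_{C^{1}})$. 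Summing the finitely many contributions gives the claim.

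I do not anticipate a real obstacle here: the argument is entirely bookkeeping. The only points needing a little care are (i) justifying the phase-extraction rigorously — this is done exactly as the analogous manipulations in \cref{Sec:residuum}, using that $Q,K$ are finite measures so all the convolutions are pointwise well defined — and (ii) checking that the constant $C$ is uniform in $\eps\in(0,\eps_{*})$ and $t\in[0,T_{*}/\eps^{2}]$, which is where the $\eps$- and $t$-independent bounds on $\phi_{c},\phi_{s}$ (and the harmlessness of the slow scaling $x\mapsto\eps x$) are used.
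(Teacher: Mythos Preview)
Your proposal is correct and follows exactly the paper's approach: apply \cref{Lem:continuity:convolution} together with the $C^{1}$-boundedness of $E_{c}$ and $E_{s}$ and the uniform bounds on $\phi_{c},\phi_{s}$. The explicit phase-extraction step is more work than needed --- \cref{Lem:continuity:convolution} with $n=0$ already applies directly to terms like $R_{c}\,Q\ast\phi_{c}$ once $\phi_{c},\phi_{s}\in C^{1}_{\text{b}}$ uniformly --- but the argument as written is valid.
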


\begin{proof}
 These estimates follow immediately from Lemma~\ref{Lem:continuity:convolution} together with the boundedness of the operators $E_{c}$ and $E_{s}$.
\end{proof}

\section{Proof of Theorem~\ref{Thm:main}}\label{Sec:Proof:main}

Based on the preparations in \cref{Sec:residuum,Sec:equation:error} we will now give the proof of our main result.

\begin{proof}[Proof of Theorem~\ref{Thm:main}]
 We first introduce some notation, namely for fixed $T\geq 0$ and $n\in\N$ we define the Banach space
 \begin{equation*}
  \B_{T}^{n}\vcc=C([0,T],C^{n}(\R)) \quad \text{with norm }\norm{f}_{\B_{T}^{n}}=\sup_{t\in[0,T]}\norm{f(t)}_{C^{n}}.
 \end{equation*}
Moreover, we note that one may easily deduce from Lemma~\ref{Lem:continuity:convolution} together with the boundedness of $E_{c}$ and $E_{s}$ that for each $D>0$ there exists $M_{D}>0$ such that it holds for all $t>0$ that
\begin{equation}\label{eq:apriori:assumption}
 \norm{N_{c}(R_{c},R_{s})}_{\B_{t}^{1}}+\eps \norm{N_{s}(R_{c},R_{s})}_{\B_{t}^{1}}\leq M_{D}\quad \text{if }\norm{R_{c}}_{\B_{t}^{4}}+\eps \norm{R_{s}}_{\B_{t}^{4}}\leq D.
\end{equation}
Furthermore, we recall from Proposition~\ref{Prop:residuum:estimates} that
\begin{equation}\label{eq:bd:delta}
 \norm{\delta_{c}}_{\B_{T_{*}/\eps^{2}}}\leq C\quad \text{and}\quad \norm{\delta_{s}}_{\B_{T_{*}/\eps^{2}}}\leq C.
\end{equation}
Finally, due to the assumptions on the initial data we have
\begin{equation}\label{eq:bd:initial:condition}
 \norm{R_{c}(0)}_{C^{4}}=\norm{R_{c}}_{\B_{0}^{4}}\leq C\quad \text{and}\quad \norm{R_{s}(0)}_{C^{4}}=\norm{R_{s}}_{\B_{0}^{4}}\leq C/\eps.
\end{equation}
By means of the semi-group $\ee^{\Lin t}$ and the relations $E_{c}^{h}E_{c}=E_{c}$ as well as $E_{s}^{h}E_{s}=E_{s}$ we can rewrite~\eqref{eq:Rc:Rs} as
\begin{equation*}
 \begin{split}
  R_{c}(t)&=R_{c}(0)+\eps^{2}\int_{0}^{t}\ee^{\Lin (t-\tau)}E_{c}^{h}\Bigl[-L_{c}(R_{c},R_{s})+\eps N_{c}(R_{c},R_{s})+\delta_{c}\Bigr]\dd{\tau}\\
  R_{s}(t)&=R_{s}(0)+\int_{0}^{t}\ee^{\Lin(t-\tau)}E_{s}^{h}\Bigl[-L_{s}(R_{c})+\eps N_{s}(R_{c},R_{s})+\delta_{s}\Bigr]\dd{\tau}.
 \end{split}
\end{equation*}
From \cref{Lem:semi:group,Lem:Lc:Ls,eq:bd:delta} we thus obtain that
\begin{equation}\label{eq:Rs:bd:1}
  \norm{R_{s}}_{\B_{t}^{4}}\leq \norm{R_{s}}_{\B_{0}^{4}}+C\int_{0}^{t}\max\{1,\tau^{-3/4}\}\ee^{-\sigma\tau}\dd{\tau}\bigl[\norm{R_{c}}_{\B_{t}^{4}}+M_{D}+C\bigr]
\end{equation}
as long as the condition in~\eqref{eq:apriori:assumption} holds. For $R_{c}$ we proceed similarly, while we additionally exploit~\cref{eq:Rs:bd:1,eq:bd:initial:condition} and the assumption $t\leq T_{*}/\eps^2$ to find 
\begin{equation}\label{eq:Rc:bd:1}
 \begin{split}
  \norm{R_{c}}_{\B_{t}^{4}}&\leq \norm{R_{c}}_{\B_{0}^{4}}+C\eps^{2}\int_{0}^{t}\ee^{C\eps^{2}(t-\tau)}\bigl(\norm{R_{c}}_{\B_{\tau}^{4}}+\norm{R_{s}}_{\B_{\tau}^{4}}+\eps M_{D}+C\bigr)\dd{\tau}\\
  &\leq C +C_{T_{*}}\eps^{2}\int_{0}^{t}\norm{R_{c}}_{\B_{\tau}^{4}}\dd{\tau}+C_{T_{*}}(\eps M_{D}+C).
 \end{split}
\end{equation}
Due to Gronwall's inequality and the assumption $t\leq T_{*}/\eps^2$ we obtain
\begin{equation}\label{eq:Rc:bd:2}
 \norm{R_{c}}_{\B_{t}^{4}}\leq C_{T_{*}}\bigl(\eps M_{D}+1\bigr)\ee^{C_{T_{*}}\eps^{2}t}\leq C_{T_{*}}\bigl(\eps M_{D}+1\bigr).
\end{equation}
If we use this estimate together with~\eqref{eq:bd:initial:condition} it follows from~\eqref{eq:Rs:bd:1} that
\begin{equation}\label{eq:Rs:bd:2}
 \norm{R_{s}}_{\B_{t}^{4}}\leq C/\eps+C M_{D}+C_{T_{*}}(\eps M_{D}+1).
\end{equation}
If we now fix first $D_{*}>0$ sufficiently large and then $\eps_{*}=\eps_{*}(D_{*})>0$ sufficiently small one immediately deduces from~\cref{eq:Rc:bd:2,eq:Rs:bd:2} that it holds $\norm{R_{c}}_{\B_{t}^{4}}+\eps \norm{R_{s}}_{\B_{t}^{4}}\leq D_{*}$ for all $t\in[0,T_{*}/\eps^{2}]$ provided $\eps\leq \eps_{*}$. Thus, the error $R=\eps^{2}R_{c}+\eps^{3}R_{s}$ remains in the ball of radius $D_{*}$ (with respect to $\norm{\cdot}_{C^{4}}$) for all $t\in[0,T_{*}/\eps^{2}]$.

Since $R=u-\phi$ we thus find together with Lemma~\ref{Lem:phi:estimates} that
\begin{equation*}
 \norm{u-\psi}_{C^{4}}\leq \norm{R}_{C^{4}}+\norm{\phi-\psi}_{C^{4}}\leq C\eps^{2}.
\end{equation*}
The existence and uniqueness of $u$ now follows straightforward. Precisely, by a standard fixed-point argument one gets that there exists a unique solution $u$ to~\eqref{eq:SW} on a small time interval. Due to the approximation result that we have just shown, this solution cannot blow up---and can thus be extended uniquely---on the interval $[0,T_{*}/\eps^2]$.
\end{proof}

\textbf{Acknowledgments:} CK and ST have been supported by a Lichtenberg 
Professorship of the VolkswagenStiftung. CK also thanks 
Alexander Mielke for very insightful discussions on the development of 
the theory of amplitude equations.

\end{document}